\newcommand{\K}{{\mathbb K}}
\newcommand{\p}{{\mathbb P}}
\newcommand{\OX}{{\mathcal O}_X}
\newcommand{\depth}{\mathop{\rm depth}\nolimits}
\newcommand{\codim}{\mathop{\rm codim}\nolimits}
\newcommand{\Pfaff}{\mathop{\rm Pfaff}\nolimits}
\newcommand{\rank}{\mathop{\rm rank}\nolimits}
\newcommand{\reg}{\mathop{\rm regularity}\nolimits}
\newcommand{\coker}{\mathop{\rm coker}\nolimits}
\newtheorem{defn0}{Definition}[section]
\newtheorem{prop0}[defn0]{Proposition}
\newtheorem{conj0}[defn0]{Conjecture}
\newtheorem{thm0}[defn0]{Theorem}
\newtheorem{lem0}[defn0]{Lemma}
\newtheorem{corollary0}[defn0]{Corollary}
\newtheorem{example0}[defn0]{Example}
\newtheorem{question0}[defn0]{Question}
\newtheorem{remark0}[defn0]{Remark}
\newtheorem{computation0}[defn0]{Computation}
\newenvironment{defn}{\begin{defn0}}{\end{defn0}}
\newenvironment{prop}{\begin{prop0}}{\end{prop0}}
\newenvironment{thm}{\begin{thm0}}{\end{thm0}}
\newenvironment{lem}{\begin{lem0}}{\end{lem0}}
\newenvironment{exm}{\begin{example0}\rm}{\end{example0}}
\newenvironment{rmk}{\begin{remark0}\rm}{\end{remark0}}
\numberwithin{equation}{section}
\title[Calabi-Yau threefolds in $\p^n$ and Gorenstein rings]%
{Calabi-Yau threefolds in $\p^n$ and Gorenstein rings}
\author{Hal Schenck}
\thanks{Schenck supported by NSF 1818646}
\address{Schenck: Mathematics
	Department \\ Auburn University\\
	Auburn \\ AL 36849\\ USA}
\email{hks0015@auburn.edu}
\author{Mike Stillman}
\thanks{Stillman and Yuan supported by NSF 1502294}
\address{Stillman: Mathematics Department \\ Cornell University \\
	Ithaca \\ NY 14850\\ USA}
\email{mike@math.cornell.edu}
\author{Beihui Yuan}
\address{Yuan: Mathematics Department \\ Cornell University \\
	Ithaca \\ NY 14850\\ USA}
\email{by238@math.cornell.edu} 
\subjclass[2000]{Primary 14J32, Secondary 13D02 } \keywords{Calabi-Yau, Gorenstein, Inverse System}
\begin{document}
	\begin{abstract}
		\noindent 
		A projectively normal Calabi-Yau threefold $X \subseteq \p^n$ has an ideal $I_X$ which is arithmetically
		Gorenstein, of Castelnuovo-Mumford regularity four. Such ideals have
		been intensively studied when $I_X$ is a complete intersection, as
		well as in the case where $X$ is codimension three. In the latter
		case, the Buchsbaum-Eisenbud theorem shows that $I_X$ is given by the
		Pfaffians of a skew-symmetric matrix. A number of recent papers study the situation when $I_X$ has codimension
		four. We prove there are 16 possible betti tables for an arithmetically Gorenstein ideal $I$ with $\codim(I)=4=\reg(I)$, and that exactly 8 of these occur for smooth irreducible nondegenerate threefolds. We investigate the situation in codimension five or more, obtaining 
		examples of $X$ with $h^{p,q}(X)$ not among those appearing for $I_X$ of lower codimension or as complete intersections in toric Fano varieties. A key tool in our approach is the use of inverse systems to identify possible betti tables for $X$.
		
	\end{abstract}
	\maketitle
	
	\section{Introduction}\label{sec:intro}
	In their 1985 paper \cite{chsw}, Candelas-Horowitz-Strominger-Witten showed that Calabi-Yau threefolds play a central role in string theory.
	This was further developed in works by Candelas-Lynker-Schimmrigk \cite{cls} and  Candelas-de la Ossa-Green-Parkes \cite{cogp}; the book of Cox-Katz \cite{CK} gives a comprehensive overview of the field.
	\begin{defn}
		A smooth variety $X$ of dimension $n$ is Calabi-Yau if $K_X \simeq \OX$ and
		\[
		H^1(\OX), \cdots, H^{n-1}(\OX) = 0.
		\]
	\end{defn}
	From the perspective of physics, the case $n=3$ is of paramount interest, and a first example of a CY threefold is a quintic hypersurface in $\p^4$.
	Generalizing the hypersurface case, when $X$ is a complete intersection (CI) of type $\{d_1, \ldots, d_{n-3}\} \subseteq \p^n$ we have
	\[
	K_X \simeq \OX(-n-1+\sum d_i).
	\]
	So a complete intersection Calabi-Yau  (CICY) threefold in $\p^n$ must have $\{d_1, \ldots, d_{n-3}\}$ satisfying
	\[\begin{array}{ccc}
		\{5\} & \mbox{ in }& \p^4\\
		\{2,4\} & \mbox{ in }& \p^5\\
		\{3,3\} & \mbox{ in }& \p^5\\
		\{2,2,3\} & \mbox{ in }& \p^6\\
		\{2,2,2,2\} & \mbox{ in }& \p^7
	\end{array}
	\]
	Green-H\"ubsch-L\"utken characterize CICY's $X \subset \prod_{i=1}^m \p^{n_i}$ in \cite{GHL}; when $m=1$, $h^{1,1}(X)=1$ and $h^{1,2}(X) \in \{65,73,89,101\}$. 
	Projective space is the simplest complete toric variety \cite{cox}, and in \cite{B}, Batyrev shows how to obtain
	CY's as hypersurfaces in toric varieties  corresponding to reflexive
	polytopes. Much activity over the last three decades has been devoted to this situation. A complete intersection
	is the first avatar of a Gorenstein ring; a Gorenstein ideal of codimension two is a complete intersection, and Buchsbaum-Eisenbud
	\cite{BE} show that a codimension three Gorenstein ideal is generated
	by the Pfaffians of a skew-symmetric matrix. From the CY perspective,
	this is investigated in \cite{r}, \cite{T} and subsequent papers. The
	codimension four case was first studied systematically by Bertin in \cite{Ber}; in \cite{cgkk} 
	Coughlan-Golebiowski-Kapustka-Kapustka list 11 Gorenstein Calabi-Yau (GoCY) threefolds in $\p^7$
	and ask if the list is complete. For other recent work on GoCY's, see \cite{BG}, \cite{BKZ}, and \cite{reid}. 

	\subsection{Preliminaries}
	For algebraic background, we refer to \cite{E}. 
	The first observation to make is that if $S=\K[x_0,\ldots x_n]$ and $I$ is a nondegenerate (i.e. containing no linear form)  homogeneous ideal in $S$ such that $R=S/I$ is arithmetically Cohen-Macaulay, then the canonical module $\omega_R$ is isomorphic to a shift $R(a)$ exactly when $R$ is arithmetically Gorenstein (henceforth {\em Gorenstein}).
	In general, 
	\[
	\omega_R \simeq R(-n-1+\reg(R)+\codim(R)), \mbox{ so we have}
	\]
	\begin{lem}
		If $X = Proj(R)$ is a arithmetically Cohen-Macaulay threefold, then
		\begin{equation}\label{Eq1}
			K_X = \OX \longleftrightarrow -n-1 +n-3 +\reg(R) = 0 \longleftrightarrow \reg(R)=4.
		\end{equation}
	\end{lem}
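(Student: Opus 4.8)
The plan is to read the lemma off directly from the isomorphism
\[
\omega_R \simeq R(-n-1+\reg(R)+\codim(R))
\]
recorded just above it, combined with the standard comparison between the canonical module of an ACM graded ring and the canonical sheaf of its \(\mathrm{Proj}\). First I would pin down the numerology. Since \(X\) is a threefold, \(\dim R = 4\), and because \(R = S/I\) with \(S = \K[x_0,\dots,x_n]\) this gives \(\codim(R) = (n+1)-4 = n-3\). Plugging this into the displayed formula yields
\[
\omega_R \simeq R\bigl(-n-1+\reg(R)+(n-3)\bigr) = R(\reg(R)-4),
\]
which already contains the numerical identity \(-n-1+(n-3)+\reg(R) = \reg(R)-4\) asserted in the second equivalence of \eqref{Eq1}.

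Next I would pass from the module to the sheaf. As \(R\) is Cohen--Macaulay of dimension \(4 \ge 2\), sheafification sends the canonical module to the dualizing sheaf of \(X\), i.e. \(K_X \simeq \widetilde{\omega_R}\); this is standard local-duality material (see \cite{E}). Applying \(\widetilde{(\;\cdot\;)}\) to the isomorphism above then gives \(K_X \simeq \OX(\reg(R)-4)\).

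Finally I would observe that \(\OX(1)\), being the restriction to \(X\) of \(\OX[\p^n](1)\) along a nondegenerate (hence positive-dimensional, connected) \(X\), is ample, so its class is not torsion in \(\mathrm{Pic}(X)\); therefore \(\OX(a) \simeq \OX\) forces \(a = 0\). Hence \(K_X \simeq \OX\) if and only if \(\reg(R)-4 = 0\), i.e. \(\reg(R) = 4\), completing the chain of equivalences in \eqref{Eq1}.

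I do not expect a genuine obstacle here: the argument is essentially bookkeeping with the canonical-module formula already in hand. The only two points that warrant a citation rather than a one-line remark are (i) the identification \(K_X \simeq \widetilde{\omega_R}\) for ACM \(R\) of the relevant dimension, and (ii) the non-torsionness of the hyperplane class used to conclude that the twist must vanish; everything else is immediate.
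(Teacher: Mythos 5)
Your argument is exactly the paper's: the paper offers no separate proof, treating the lemma as an immediate consequence of the displayed shift formula $\omega_R \simeq R(-n-1+\reg(R)+\codim(R))$ together with $\codim(R)=n-3$ for a threefold in $\p^n$, which is precisely how you proceed. Your added details (sheafifying $\omega_R$ to $K_X$ and using non-torsionness of $\OX(1)$ to force the twist to vanish) are correct and simply make explicit what the paper leaves implicit.
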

	For $R$ Gorenstein, we may quotient by a regular sequence of linear forms, reducing to an Artinian Gorenstein ring with the same
	homological behavior, described below. Any Artinian Gorenstein ring arises (\cite{E},
	\S 21) via  Macaulay's {\em inverse system} construction: Let $F$ be a homogeneous
	polynomial in $S$ of degree $d$ over a field $\K$ of characteristic
	zero. The set of differential operators $P(\frac{\partial}{\partial x_0},\ldots,
	\frac{\partial}{\partial x_n})$ which annihilate $F$ generates an
	ideal $I_F$ in $T = \K[\frac{\partial}{\partial
		x_0},\ldots,\frac{\partial}{\partial x_n}]$, which is called the
	inverse system $I_F \subseteq T$. The corresponding ring $T/I_F$ is an
	Artinian Gorenstein ring of regularity $d$.
	\begin{defn}\label{bettiTable}
		For $I \subseteq S= \K[x_0,\ldots, x_n]$ homogeneous, the graded betti numbers are
		\[
		b_{ij} = \dim_{\K}Tor_i(S/I,\K)_j.
		\]
		In betti table notation \cite{e3}, these numbers are displayed as an array with top left entry in position $(0,0)$ and position $(i,j)$ equal to $b_{i, i+j}$. The reason for this indexing is so that 
		\[
		\reg(S/I) = \sup\{j \mid b_{i,i+j} \ne 0\}
		\]
		is given by the index of the bottom row of the betti table. 
	\end{defn}
	\begin{exm}
		For a GoCY threefold $X \subseteq \p^6$ given by the Pfaffians of a skew-symmetric $7 \times 7$ matrix $M$ of generic linear forms, 
		R\o dland \cite{r} shows that $h^{1,1}(X)=1$ and $h^{1,2}(X)=50$. 
		By \cite{BE} and \cite{em}, for a generic quartic $F$ in $\K[x_0,x_1,x_2]$, $I_F$ is the Pfaffians of a skew $7 \times 7$ matrix $M$ of linear forms, with betti table below. So for this example, the betti table of $I_F$ can be realized by a GoCY. 
		\begin{small}
			$$
			\vbox{\offinterlineskip 
				\halign{\strut\hfil# \ \vrule\quad&# \ &# \ &# \ &# \ &# \ &# \
					&# \ &# \ &# \ &# \ &# \ &# \ &# \
					\cr 
					total&1&7&7&1\cr 
					\noalign {\hrule}
					0&1 &--&--& --&    \cr 
					1&--&-- &-- & --&        \cr 
					2&--&7 &7 &--&        \cr 
					3&--&-- &-- &--&         \cr 
					4&--&-- &-- &1&         \cr 
					\noalign{\bigskip}
					\noalign{\smallskip}
			}}
			$$
		\end{small}
		\noindent In \cite{T}, Tonoli finds smooth CY's $X \subseteq \p^6$ with $12 \le \deg(X) \le 17$.  
		By Lemma~\ref{DegX} below, only those with $\deg(X)\le 14$ are GoCY's; all have $h^{1,1}(X)=1$. 
		Ordered by increasing degree of $X$ from $\{12,\ldots ,17\}$, the respective $h^{1,2}(X)$ are $\{73,61,50,40,31,23 \}$. 
	\end{exm} 
	
	\noindent For $S/I$ Artin Gorenstein of regularity 4, the Hilbert function
	of $S/I$ is \[
	HF(S/I,t) = \Big(1, n+1, h_2, n+1, 1\Big), \mbox{ with } h_2 \le {n+2 \choose 2}. 
	\]
	Migliore-Zanello \cite{MZ} show that Stanley's example of a Gorenstein ring with non-unimodal $H$-vector $(1,13,12,13,1)$ is minimal, so for $n \le 11$, $n+1 \le h_2$. 
	If $I$ can be lifted to a prime ideal in four more variables (Example~\ref{Ex1} shows this can occur), the corresponding threefold $X$ will have degree  
	\begin{equation}\label{Eq2}
		\deg(X) = \sum_t HF(S/I,t) = 2n+4+h_2. 
	\end{equation}
	\begin{lem}\label{DegX}A GoCY $ X \subseteq  \p^n$ with $n \le 15$ has 
		\begin{equation}\label{Eq3}
			3n-7 \le \deg(X) \le \frac{n^2-n-2}{2}.
		\end{equation}
	\end{lem}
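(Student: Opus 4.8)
The plan is to translate the statement into a pair of inequalities for the middle entry of the $h$-vector of the Artinian reduction of $R=S/I_X$, and then to obtain the upper inequality from Macaulay's bound and the lower one from the Migliore--Zanello theorem.

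First I would set up the Artinian reduction. Since $X$ is a GoCY, $R=S/I_X$ is arithmetically Gorenstein, hence Cohen--Macaulay of Krull dimension $4$, and $\reg(R)=4$ by \eqref{Eq1}. Quotienting $R$ by a general linear system of parameters $\ell_1,\dots,\ell_4$ produces an Artinian Gorenstein algebra $A$ with $\reg(A)=4$, and because $X$ is nondegenerate we have $\dim_{\K}A_1=(n+1)-4=n-3$, so $A$ has embedding dimension $n-3$. As $A$ is Artinian Gorenstein with socle in degree $4$, symmetry of the $h$-vector forces
\[
HF(A,\cdot)=\bigl(1,\ n-3,\ h_2,\ n-3,\ 1\bigr)
\]
for a single unknown $h_2$, and since $\deg(X)=e(R)=\dim_{\K}A$ we get $\deg(X)=2n-4+h_2$ (compare \eqref{Eq2}, after renaming the ambient dimension). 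So the lemma is equivalent to the two bounds $n-3\le h_2\le \binom{n-2}{2}$.

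For the upper bound, $h_2=\dim_{\K}A_2$ is at most the number of degree-two monomials in the $n-3$ variables of the ambient polynomial ring, i.e. $\binom{n-2}{2}$; equivalently this is Macaulay's estimate $h_2\le h_1^{\langle 1\rangle}$. Substituting gives $\deg(X)\le 2n-4+\binom{n-2}{2}=\frac{n^2-n-2}{2}$, and this half of the argument uses nothing about $n\le 15$.

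For the lower bound I would argue that the $h$-vector of $A$ is unimodal, so that $h_2\ge h_1=n-3$ and hence $\deg(X)\ge 3n-7$. This is the only delicate step and the sole place where the hypothesis $n\le 15$ is used: a general Artinian Gorenstein $h$-vector need not be unimodal (Stanley's $(1,13,12,13,1)$), but by Migliore--Zanello \cite{MZ} the least embedding dimension in which a non-unimodal Gorenstein $h$-vector of this shape can occur is $13$, and $A$ has embedding dimension $n-3\le 12$. The main obstacle is precisely this appeal: without the Migliore--Zanello result one cannot exclude a small middle coefficient $h_2$, and $n\le 15$ is exactly the range their theorem covers here.
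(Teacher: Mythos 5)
Your argument is correct and is essentially the paper's own proof: pass to the Artinian reduction with $h$-vector $(1,\,n-3,\,h_2,\,n-3,\,1)$ and $\deg(X)=2n-4+h_2$, bound $h_2\le\binom{n-2}{2}$ from above by Macaulay, and bound $h_2\ge n-3$ from below via the Migliore--Zanello optimality of Stanley's $(1,13,12,13,1)$, which is exactly where the hypothesis $n\le 15$ (embedding dimension $n-3\le 12$) is used. No changes needed.
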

	\begin{proof}
		Apply Equations~\ref{Eq1} and \ref{Eq2} and the result of \cite{MZ} to the Artinian reduction of $S/I_X$.
	\end{proof}
	\noindent This generalizes Lemma 2.1 of \cite{cgkk}, which shows that a GoCY $X \subseteq \p^7$ has $14 \le \deg(X) \le 20$. 
	
	\section{Gorenstein codimension four}
	There have been several recent works on GoCY threefolds in $\p^7$. In \cite{Ber}, Bertin finds four distinct families. In addition to Reid's
	work \cite{reid} on a structure theory for Gorenstein codimension four ideals, work on GoCY's in $\p^7$ appears in \cite{BG}, \cite{BKZ}, and \cite{cgkk}, where Coughlan-Golebiowski-Kapustka--Kapustka give a table with 11 types of GoCY threefolds in $\p^7$. They ask if the list is complete; on their list $h^{1,1}(X) \in \{1,2\}$ and $h^{1,2}(X) \in \{34,36,37,45,46,54,55,58,65,76,86\}$. In this section, we use Betti tables to explore their question. The $h^{p,q}(X)$ cannot be read from the betti table of $S/I_X$: the two families of degree 18 GoCY's appearing in \cite{cgkk} have the same betti table, but $h^{1,2}(X)$ is either $45$ or $46$. 
	
	\begin{exm}\label{Ex1}The inverse system of a generic quartic in four variables yields an ideal with betti table labelled CGKK 11 below. 
		For an $n \times n$ matrix $M$ with $M_{i,j} = x_{ij}$, Gulliksen-Neg\.ard 
		\cite{GN} determine the resolution of the ideal $I_{n-1}$ of $n-1 \times n-1$ minors: it is Gorenstein of codimension 
		four, and has regularity $2n-4$. Hence if $n=4$, this yields a Gorenstein codimension 
		four ideal in $\p^{15}$. Quotienting with a regular sequence of eight 
		linear forms yields a smooth GoCY threefold in $\p^7$, with betti diagram 
		equal to that of CGKK 11. The Hodge numbers are $h^{1,1}=2$ and $h^{1,2}=34$; this example was first identified by Bertin in \cite{B}. 
	\end{exm}

	\begin{thm}\label{possibleBetti}
		An Artin Gorenstein algebra $A=S/I$ with $\reg(A) = 4 = \codim(A)$ and $I$ nondegenerate has one of the 16 betti diagrams below. Table 1 below corresponds to the 11 classes of GoCY in \cite{cgkk}, and Table 2 to the remaining classes. We defer the proof until the end of this section.

	\end{thm}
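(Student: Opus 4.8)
The plan is to classify the possible Betti tables by working with the Artinian reduction $A = S/I$, which by hypothesis has Hilbert function $(1, c+1, h_2, c+1, 1)$ where I write $c+1$ for the embedding dimension of the Artinian algebra (so $\codim(I) = c = 4$), hence $HF(A,\cdot) = (1,5,h_2,5,1)$ in the minimal case; more precisely, after Artinian reduction the relevant ambient polynomial ring is $\K[x_0,\ldots,x_4]$. The self-duality of the minimal free resolution of a Gorenstein algebra (the resolution is symmetric under $\Hom(-,A)$ up to the shift by the socle degree plus codimension, here $4+4=8$) forces the Betti table to be palindromic: $b_{i,j} = b_{c-i,\, \reg+c-j}$. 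Combined with $\reg(A)=4$ and $\codim(A)=4$, this means the table occupies rows $0$ through $4$ and columns $0$ through $4$, with row $0$ and row $4$ each a single $1$ (in positions $(0,0)$ and $(4,4)$ respectively), and the middle portion — rows $1,2,3$, columns $1,2,3$ — determined by the three numbers $b_{1,2}, b_{1,3}$ (number of quadric and cubic generators) together with $b_{2,3}$ and $b_{2,4}$, since symmetry then fixes $b_{3,*}$ from $b_{1,*}$ and $b_{2,4}=b_{2,4}$ pairs with itself while $b_{2,3}$ pairs with $b_{2,5}$.

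First I would write down the constraints. The alternating sum of the Betti numbers in each column must reproduce the Hilbert series numerator $K(t) = (1-t)^5 \cdot HF(A,t)$ (as a polynomial, since $A$ is Artinian), which for $HF = (1,5,h_2,5,1)$ gives a specific degree-$\le 9$ polynomial whose coefficients are linear expressions in $h_2$. This pins down each "column sum" $\sum_i (-1)^i b_{i,i+j}$. Then minimality of the resolution forbids "ghost" cancellation in adjacent homological degrees in certain patterns — more precisely, consecutive strands cannot overlap in a way that would force a unit entry in a differential; concretely one uses that if $b_{i,j}$ and $b_{i+1,j}$ are both nonzero then the resolution is non-minimal, so in each column the nonzero Betti numbers occupy a contiguous block and one gets inequalities like $b_{1,3} \le$ (something) from the structure of the quadratic/cubic parts. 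The upshot is a small linear-algebra / integer-programming problem: finitely many $(b_{1,2}, b_{1,3}, b_{2,3}, b_{2,4})$ satisfy all the column-sum equations and the positivity/minimality constraints, parametrized by $h_2$, and one checks $h_2$ itself is bounded (here $5 \le h_2 \le \binom{6}{2}=15$, with the lower bound from Migliore–Zanello as cited, and the upper bound from $HF(A,2)_{\max}$, though in fact the minimality and symmetry constraints cut $h_2$ down further). Enumerating gives exactly $16$ numerical solutions.

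The step I expect to be the genuine obstacle is not the linear bookkeeping but showing each of the $16$ numerically-possible tables is actually \emph{realized} by an honest Artin Gorenstein algebra — the Betti-table constraints above are necessary, and a priori some solution to the linear system might be "consistency-phantom," i.e. no minimal resolution of a Gorenstein ideal has that shape. Here is where the inverse system machinery from the Preliminaries is the right tool: every Artin Gorenstein quotient $S/I$ with socle degree $4$ is $T/I_F$ for a quartic $F \in \K[x_0,\ldots,x_4]$ (or fewer variables, after degenerating), and conversely. So I would exhibit, for each of the $16$ candidate tables, an explicit quartic $F$ — typically a sum of powers of linear forms, or a generic quartic, or a quartic supported on a subspace, with the number of variables and the "fatness" of the apolar scheme tuned to hit the prescribed $h_2$ and generator degrees — whose apolar ideal $I_F$ has that Betti table, verifying the Betti numbers by a (symbolic or by-hand) computation of the minimal resolution. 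The classification then follows: necessity from the symmetry + Hilbert-function + minimality analysis, sufficiency from the $16$ inverse-system witnesses; and the identification of which tables carry the CGKK labels $1$–$11$ is a matter of matching these witnesses against the known degree-$\le 20$ GoCY examples in $\p^7$ (Example~\ref{Ex1} and its analogues), with the remaining $5$ collected in Table 2. The deferred proof at the end of the section presumably carries out exactly this realization step in detail.
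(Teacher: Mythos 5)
Your setup already contains a substantive slip: an Artinian reduction of a nondegenerate Gorenstein ideal with $\codim(I)=4$ lives in a polynomial ring in \emph{four} variables, so the Hilbert function is $(1,4,h_2,4,1)$ with $h_2=10-b_{1,2}\in\{4,\ldots,10\}$, and the inverse-system witnesses are quartics in four dual variables; your $(1,5,h_2,5,1)$ with $h_2\le\binom{6}{2}=15$ describes a codimension-five problem and would change the entire enumeration (indeed every table in Tables 1 and 2 has $b_{3,6}=b_{1,2}\le 6$ and corner entries consistent with four variables).

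The more serious gap is in the step you wave through as ``a small linear-algebra / integer-programming problem.'' The Hilbert function (column alternating sums) together with Gorenstein self-duality do \emph{not} cut the possibilities down to finitely many tables: for each fixed number $a=b_{1,2}$ of quadric generators, they leave a two-parameter family indexed by $v=(b_{2,3},b_{3,4})$, the numbers of linear first and second syzygies on the quadric subideal $I_2$ (e.g.\ for $a=4$ the middle $3\times 3$ block is $\bigl(\begin{smallmatrix}4&b&c\\ b&2c+6&b\\ c&b&4\end{smallmatrix}\bigr)$, numerically consistent for every $b,c\ge 0$). The constraint you invoke to force finiteness --- that $b_{i,j}$ and $b_{i+1,j}$ cannot both be nonzero in a minimal resolution --- is false: minimal resolutions routinely have overlapping strands, and several of the sixteen tables themselves violate it (Type 2.1 has $b_{1,3}=9$ and $b_{2,3}=1$; CGKK 2 has $b_{1,3}=1$ and $b_{2,3}=5$); and even if granted, contiguity would not bound $b_{2,3}$ or $b_{3,4}$. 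The actual content of the paper's proof is exactly the elimination of all but finitely many values of $v$ for each $a$: Macaulay's growth bound and Gotzmann persistence applied to the subideals $I_2$ and $C_3$ (quadrics, resp.\ quadrics and cubics) of $I$, Kunz's theorem that Gorenstein almost complete intersections do not exist, the rank analysis of linear second syzygies in Lemma~\ref{lem_v_neq_(2,1)}, and mapping-cone and resolution-symmetry arguments (e.g.\ relating zero columns in the ``bottom corner'' of $d_3$ to the linear syzygies of $I_2$). None of this machinery, nor any substitute for it, appears in your proposal, so the assertion ``enumerating gives exactly $16$ numerical solutions'' is unsupported. By contrast, the realization step you identify as the main obstacle is the easy half: the paper disposes of it by a {\tt Macaulay2} search, and your inverse-system plan (corrected to quartics in four variables) would serve equally well.
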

	\begin{table}[ht]
		\begin{verbatim}
			CGKK 1             1 9 16 9 1              CGKK 5,6      1 9 16 9 1
			                0: 1 .  . . .                         0: 1 . .  . .
			                1: . 6  8 3 .                         1: . 3 2  . .
			                2: . .  . . .                         2: . 6 12 6 .
			                3: . 3  8 6 .                         3: . . 2  3 .
			                4: . .  . . 1                         4: . . .  . 1
			
			CGKK 2             1 6 10 6 1              CGKK 7,8      1 10 18 10 1 
			                0: 1 .  . . .                         0: 1  .  .  . .
			                1: . 5  5 . .                         1: .  2  .  . .
			                2: . 1  . 1 .                         2: .  8 18  8 .
			                3: . .  5 5 .                         3: .  .  .  2 .
			                4: . .  . . 1                         4: .  .  .  . 1
			
			CGKK 3             1 4 6 4 1               CGKK 9,10     1 13 24 13 1
			                0: 1 . . . .                          0: 1  .  .  . .
			                1: . 4 . . .                          1: .  1  .  . .
			                2: . . 6 . .                          2: . 12 24 12 .
			                3: . . . 4 .                          3: .  .  .  1 .
			                4: . . . . 1                          4: .  .  .  . 1
			
			CGKK 4             1 7 12 7 1              CGKK 11       1 16 30 16 1 
			                0: 1 .  . . .                         0: 1  .  .  . .
			                1: . 3  . . .                         1: .  .  .  . .
			                2: . 4 12 4 .                         2: . 16 30 16 .
			                3: . .  . 3 .                         3: .  .  .  . .
			                4: . .  . . 1                         4: .  .  .  . 1
			
		\end{verbatim}
		\caption{\textsf{Betti table for GoCY's in Coughlan-Golebiowski-Kapustka–Kapustka}}
	\end{table}
	\vskip -.3in
	\noindent The first four columns on the left have degrees $\{14,15,16,17\}$, while the columns on the right have degrees $\{17,18,19,20\}$, as noted above the corresponding GoCY's can have different Hodge numbers. 
	
	\pagebreak
	\noindent The other 8 betti tables for an Artin Gorenstein algebra $A$ with $\reg(A) = 4 = \codim(A)$ are:
	\begin{table}[ht]
		\begin{verbatim}
			Type 2.1           1 11 20 11 1            Type 2.5      1 11 20 11 1
			                0: 1  . .  .  .                       0: 1 .  .  .  .
			                1: .  2 1  .  .                       1: . 3  3  1  .
			                2: .  9 18 9  .                       2: . 7  14 7  .
			                3: .  . 1  2  .                       3: . 1  3  3  .
			                4: .  . .  .  1                       4: . .  .  .  1
			
			Type 2.2           1 8 14 8 1              Type 2.6      1 9 16 9 1 
			                0: 1 . .  . .                         0: 1 . .  . .
			                1: . 3 1  . .                         1: . 4 4  1 .
			                2: . 5 12 5 .                         2: . 4 8  4 .
			                3: . . 1  3 .                         3: . 1 4  4 .
			                4: . . .  . 1                         4: . . .  . 1
			
			Type 2.3           1 7 12 7 1              Type 2.7      1 7 12 7 1
			                0: 1 . . . .                          0: 1 .  . . .
			                1: . 4 3 . .                          1: . 5  5 1 .
			                2: . 3 6 3 .                          2: . 1  2 1 .
			                3: . . 3 4 .                          3: . 1  5 5 .
			                4: . . . . 1                          4: . .  . . 1
			
			Type 2.4           1 6 10 6 1              Type 2.8      1 9 16 9 1 
			                0: 1 . .  . .                         0: 1 . .  . .
			                1: . 4 2  . .                         1: . 5 6  2 .
			                2: . 2 6  2 .                         2: . 2 4  2 .
			                3: . . 2  4 .                         3: . 2 6  5 .
			                4: . . .  . 1                         4: . . .  . 1
			
		\end{verbatim}
		\caption{\textsf{Betti tables for the remaining 8 Artin Gorenstein algebras.}}
	\end{table}
	\vskip -.5in
	\begin{thm}
		No diagram in Table 2 occurs for a smooth, irreducible nondegenerate threefold in $\p^7$.
	\end{thm}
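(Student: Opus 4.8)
The plan is to reduce the statement to one about Gorenstein Calabi--Yau threefolds and then eliminate the eight diagrams of Table~2 one at a time, using the degrees of the minimal generators of $I_X$ and the syzygies among its quadrics that each diagram prescribes.

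First I would note that a smooth irreducible nondegenerate threefold $X\subseteq\p^7$ realizing any of the sixteen diagrams is automatically a GoCY: since $X$ is a threefold in $\p^7$ we have $\codim(S/I_X)=4$, equal to the projective dimension, so $S/I_X$ is arithmetically Cohen--Macaulay; the last betti number is $1$, so $S/I_X$ is in fact arithmetically Gorenstein, and the bottom row of the diagram sits in degree $4$, so $\reg(S/I_X)=4$. Then Equation~\ref{Eq1} gives $K_X\simeq\OX$, and arithmetic Cohen--Macaulayness gives $H^1(\OX)=H^2(\OX)=0$. So it suffices to prove that no diagram in Table~2 is the betti table of a GoCY in $\p^7$. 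For each diagram I would record the data it forces: the number $q=b_{1,2}=h^0(\mathcal{I}_X(2))$ of quadric generators, the linear syzygies $b_{2,3}$ among them, the presence or absence of a quartic generator ($b_{3,4}\neq0$), and $\deg X=10+h_2=20-q$ from Equation~\ref{Eq2}. I would also pass to the general hyperplane section $S=X\cap H\subseteq\p^6$, a smooth projectively normal surface with $\omega_S=\mathcal{O}_S(1)$ --- minimal of general type with $K_S^2=\deg X$, $p_g=7$, irregularity zero --- and to the general curve section $C=X\cap\p^5\subseteq\p^5$, a smooth irreducible nondegenerate arithmetically Gorenstein curve with $\omega_C=\mathcal{O}_C(2)$, hence $g(C)=\deg C+1=\deg X+1$; both inherit the betti table of $X$.

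The core is a case analysis, modelled on Type~2.1. There the two quadric generators satisfy a single linear syzygy $a_1q_1+a_2q_2=0$; a gcd argument in the polynomial ring (comparing degrees, $\deg a_i=1$) forces $q_1$ and $q_2$ to share a common linear factor $\ell$, so $X\subseteq V(q_1)=V(\ell)\cup V(\ell')$ with $\ell'$ linear, and irreducibility of $X$ makes it degenerate --- contradicting nondegeneracy. For the other diagrams I would study the subscheme $Y=V((I_X)_2)\subseteq\p^7$ cut out by the quadrics. A preliminary observation forces the quadrics to be degenerate in several cases: for $q=5$ (Types~2.7, 2.8) they cannot form a regular sequence, as $V((I_X)_2)$ would then be a surface, too small to contain $X$; for $q=4$ (Types~2.3, 2.4, 2.6) a regular sequence would make $V((I_X)_2)$ a complete intersection threefold of degree $16$ equal to $X$, forcing the Koszul betti table of CGKK~3 and contradicting the diagram. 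In these cases $Y$ has dimension $>3$, and I would combine the linear-syzygy data, the low-degree cubic and quartic generators, and where helpful the geometry of the section curve $C\subseteq\p^5$ (which lies on $q=20-\deg C$ quadrics), to show that a smooth irreducible nondegenerate threefold of the prescribed degree cannot sit inside $Y$ --- typically because $Y$ is then a scroll or another small-degree variety carrying no admissible linear system cutting out $X$, or because $Y$ is reducible or nonreduced in a way forcing $X$ to be so. For Type~2.2 ($q=3$, one linear syzygy), and more generally where a geometric argument does not suffice, the cleanest route may be to parametrize the (irreducible, low-dimensional) stratum of Gorenstein codimension-four ideals with the prescribed graded betti numbers and verify that a generic member defines a singular or reducible threefold.

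I expect the main obstacle to be the diagrams with few, mildly degenerate quadrics --- above all Type~2.2 ($q=3$, one linear syzygy) and Type~2.4 ($q=4$, two linear syzygies) --- where the scheme cut out by the low-degree part of $I_X$ is not visibly incompatible with a smooth nondegenerate threefold, so the argument must exploit the full self-dual resolution, or an explicit description of the relevant stratum of ideals, rather than a soft degree or dimension count. A secondary difficulty is that some diagrams that \emph{do} occur --- for instance CGKK~1, which like Types~2.5--2.8 carries quartic generators --- share coarse invariants with diagrams in Table~2, so the exclusion must be sensitive to the fine graded structure of the resolution, not merely to the quadric count or the pattern of generator degrees.
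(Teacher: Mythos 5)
There is a genuine gap: of the eight diagrams, your proposal actually eliminates only Type~2.1 (your gcd argument there is correct and is essentially the rank-two case of the syzygy result the paper invokes). For the remaining seven types you offer only a programme --- ``$Y=V((I_X)_2)$ is typically a scroll or a small-degree variety,'' or ``parametrize the stratum of ideals with this betti table and check a generic member'' --- and you yourself flag Types~2.2 and~2.4 as unresolved. The generic-member strategy is not a proof of nonexistence: you would need the betti stratum to be irreducible (not automatic, and not argued), you would still have to carry out the verification, and for several diagrams the true statement is stronger than ``the generic member is singular'': for Types~2.2, 2.3, 2.5--2.7 \emph{no} nondegenerate prime ideal has the given table at all, which cannot be seen from a generic member of a stratum. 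Your hyperplane-section and degree bookkeeping ($\deg X=20-q$, the curve section with $\omega_C=\mathcal{O}_C(2)$) is fine but never gets used to exclude anything.

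The missing ingredients are precisely the structural tools the paper uses. For Types~2.1, 2.2, 2.5, 2.6, 2.7 the engine is Theorem~1.7 of \cite{ss}: a nondegenerate prime ideal cannot have a linear $n^{th}$ syzygy of rank $\le n+1$; rank $n+2$ forces the ideal to contain the $2\times 2$ minors of a 1-generic $2\times(n+2)$ matrix (whose extra linear syzygies then contradict the table, as for Type~2.2); rank $n+3$ forces $4\times4$ Pfaffians (used to kill the $(5,5,1)$ top row). Note that for the $(c,c,1)$ rows the argument runs through the \emph{linear second syzygy}, which your proposal never engages. Type~2.3 needs a separate argument (mapping cone of $I:F$ plus a primary decomposition of the $2\times3$-minors ideal, forcing degeneracy), Type~2.8 needs the Buchsbaum--Rim complex of the 1-generic $6\times 2$ matrix of second syzygies to rule out linear first syzygies, and Type~2.4 --- the one case where smoothness rather than primality is the obstruction --- needs the Vasconcelos--Villarreal theorem on codimension-four, deviation-two Gorenstein ideals to exhibit $I$ as a quadric section of a $5\times5$ Pfaffian ideal that is singular along a $\p^1$, hence $V(I)$ singular in at least a length-two scheme. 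Without these (or genuine substitutes), the proposal does not establish the theorem.
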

	\begin{proof}
		In \S 3.4 we apply results of \cite{VV} to prove a structure theorem for any irreducible nondegenerate threefold in $\p^7$ with betti diagram of Type 2.4, and show the resulting variety cannot be smooth. For the other betti tables, we apply a result of \cite{ss}.  A matrix of linear forms is {\em 1-generic} if no entry can be reduced to zero by (scalar) row or column operations; a linear $n^{th}$ syzygy is an element of $Tor^S_{n+1}(S/I,\K)_{n+2}$. For a nondegenerate prime ideal $P$, Theorem 1.7 of \cite{ss} shows:
		\begin{enumerate}
			\item $P$ cannot have a linear $n^{th}$ syzygy of rank $\le n+1$, or $P$ is not prime.
			\item If $P$ has a linear $n^{th}$ syzygy of rank $n+2$, then $P$ contains the $2 \times 2$ minors of a 1-generic $2 \times (n+2)$ matrix.
			\item If $P$ has a linear $n^{th}$ syzygy of rank $n+3$, then $P$ contains the $4 \times 4$ Pfaffians of a skew-symmetric 1-generic $n+4 \times n+4$ matrix.
		\end{enumerate}
		A betti table of Type 2.1 is ruled out by $(1)$, and a betti table of Type 2.2 is ruled out by $(2)$, since the $2 \times 2$ minors of a $2 \times 3$ matrix have two independent linear syzygies. For the three betti tables having top row of the form $(c, c, 1)$, we argue as follows. When $c=3$, the linear second syzygy can have rank at most 3, since it involves the 3 first syzygies. Hence by $(1)$, the ideal cannot be prime. When $c=4$,  the linear second syzygy can have rank at most 4, and in this case by $(2)$ it contains the $2\times 2$ minors of a 1-generic $2\times 4$ matrix, which would yield a top row of the betti table with entries $(6, 8, 3)$. When $c=5$, $(3)$ implies that $P$ contains the Pfaffians, and since there are only five quadrics, the quadratic part of the idea is exactly the Pfaffians, which do not have a linear second syzygy. 
		
		For Type 2.3, we will show that a prime non-degenerate ideal $P$ cannot have top row of the betti table equal to $(4, 3, 0)$. Let $J_2$ be the subideal of $P$ generated by quadrics in $P$.
		By $(1)$ and $(3)$ the first syzygies all have rank three; take a subideal $I\subseteq J_2$ consisting of three elements, which by $(2)$ is generated by the $2 \times 2$ minors of a $2 \times 3$ matrix, and let $F$ denote the remaining quadric, so $J_2=I+\langle F\rangle$. Consider the mapping cone resolution of $S/J_2$ from the short exact sequence
		\begin{equation}\label{sesQuadric}
			0\longrightarrow S(-2)/I:F \longrightarrow S/I\longrightarrow S/I+F \longrightarrow 0.
		\end{equation}
		It follows that $I:F$ must have a linear generator $L$, so $LF \in I$. If $I$ is prime, then either $L \in I$ or $F \in I$, a contradiction.
		So suppose $I$ is not prime, and take a primary decomposition 
		\[
		I = \cap_{i=1}^m Q_i, \mbox{ with }\sqrt{Q_i}=P_i \mbox{ all  codimension two.}
		\]
		Since $I$ is codimension two and Cohen-Macaulay and $\deg(I) = 3$, we must have $m \le 3$.
		\begin{enumerate}
			\item Case 1: $m=3$. Then $Q_i=P_i$ and $I = \cap_{i=1}^3 P_i$ with $P_i$ generated by two linear forms.
			\item Case 2: $m=2$. Then $\deg(Q_1)=1$, $\deg(Q_2)=2$, so $\sqrt{Q_1}$ is generated by two linear forms. 
			\item Case 3: $m=1$. Then $\sqrt{Q_1}=P_1$, with $\deg(P_1) \in \{1,2,3\}$. If $\deg(P_1)=3$, then $I$ is prime, and if $\deg(P_1) = 1$ or $2$, $P_1$ contains a linear form. 
		\end{enumerate}
		In particular, we see that $P$ is degenerate. For Type 2.8, both second syzygies must have rank six, because if either had lower rank, then we would be in one of the cases $(1),(2),(3)$, all of which are inconsistent with a betti table having top row $(5,6,2)$. Let $M$ denote the corresponding $6 \times 2$ matrix of linear second syzygies; $M$ is 1-generic: if not, there is a second syzygy of rank $\le 5$, a contradiction. We claim that $M^t$ has no linear first syzygies (notice that kernel of $M^t$ will contain any linear first syzygies on $J_2$, which is the subideal of $P$ generated by quadrics in $P$). 
		This follows because $\coker(M^t)$ has a Buchsbaum-Rim resolution \cite{E}, Theorem A2.10. The Buchsbaum-Rim complex is a 
		resolution for $\coker(M^t)$ iff the $2 \times 2$ minors of $M$ have depth $6-2+1 = 5$; since $M$ is 1-generic, the $2 \times 2$ minors are Cohen-Macaulay with an Eagon-Northcott resolution; in particular $\depth(I_2(M))=5$. As the first syzygies in the Buchsbaum-Rim complex for $M^t$ come from $\Lambda^3(S^6)$ via the splice map described in \cite{E}, they are quadratic. We conclude there are no linear syzygies on $\coker(M^t)$, hence no linear first syzygies on $J_2$, a contradiction.
	\end{proof}
	\noindent For the proof of Theorem 2.2, we will need the theorems of Macaulay and Gotzmann \cite{sch}: for a graded algebra $S/I$ with  Hilbert function $h_i$, write 
	\[
	h_i = \binom{a_i}{i} + \binom{a_{i-1}}{i-1}+ \cdots \mbox{ and }h_i^{\langle i \rangle} = \binom{a_i+1}{i+1} + \binom{a_{i-1}+1}{i}+ \cdots, \mbox{ with } a_i > a_{i-1} > \cdots
	\]
	Macaulay proved that $h_{i+1} \le h_i^{\langle i \rangle}$, and Gotzmann proved if $I$ is generated in a single degree $t$ and equality holds in Macaulay's formula in the first degree $t$, then 
	\[h_{t+j} = \binom{a_t+j}{t+j} + \binom{a_{t-1}+j-1}{t+j-1}+ \cdots\]
	We also need the following lemma
	\begin{lem}\label{lem_v_neq_(2,1)}
		Let $I_2$ be the subideal of $I$ generated by the quadrics in $I$, and let $v=(b_{23},b_{24})$, which are the number of linear first and second syzygies on $I_2$. Then 
		\begin{enumerate}
			\item[(a)] $v\neq (2,1)$. 
			\item[(b)] if $a=b_{12}\geq 4$, then $v\neq (3,1)$.
		\end{enumerate}
	\end{lem}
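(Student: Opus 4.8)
The plan is to dispose of (a) with the UFD structure of $S$, and to prove (b) by first forcing a rigid structure on the quadratic part of $I_2$ and then reading off a contradiction from the inverse-system presentation $I=I_F$.

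Write $g_1,\dots ,g_a$ for a $\K$-basis of the quadrics in $I$ (equivalently, a minimal generating set of $I_2$); these are $\K$-linearly independent. Let $\sigma_1,\sigma_2,\dots\in S^{a}$ be the linear first syzygies of $I_2$. Since $I_2$ has no generators of degree below $2$, the $\sigma_i$ minimally generate the degree-$3$ component of the first syzygy module, and any minimal degree-$4$ second syzygy is a relation $\sum_k m_k\sigma_k=0$ with the $m_k$ linear forms, not all zero. \emph{For (a):} suppose $I_2$ has exactly two linear first syzygies $\sigma_1,\sigma_2$ and such a relation $m_1\sigma_1+m_2\sigma_2=0$. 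If $m_1,m_2$ are $\K$-proportional, say $m_2=\lambda m_1$ with $m_1\ne 0$, then $m_1(\sigma_1+\lambda\sigma_2)=0$ in the torsion-free module $S^{a}$, so $\sigma_1+\lambda\sigma_2=0$, contradicting minimality. If $m_1,m_2$ are $\K$-independent then $\gcd(m_1,m_2)=1$ in the UFD $S$, so $m_1\sigma_1=-m_2\sigma_2$ gives $m_2\mid (\sigma_1)_j$ for each coordinate $j$; hence $\sigma_1=m_2 c$ for a constant vector $c\in\K^{a}$, and the syzygy identity $\sum_j(\sigma_1)_j g_j=0$ forces $\sum_j c_j g_j=0$, so $c=0$ and $\sigma_1=0$, again a contradiction. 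Thus $I_2$ has no linear second syzygy, and in particular $v\ne(2,1)$.

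\emph{For (b):} assume $v=(3,1)$ with $a=b_{12}\ge 4$; I will produce a fourth linear first syzygy of $I_2$, contradicting the count $p=3$. First, the span $\langle m_1,m_2,m_3\rangle$ of the coefficient forms of the linear second syzygy must be three-dimensional: a one-dimensional span contradicts independence of the $\sigma_k$, and a two-dimensional span rewrites the relation as $m_1\sigma_1'+m_2\sigma_2'=0$ with $m_1,m_2$ independent and $\sigma_1',\sigma_2'$ still independent, which the independent-coefficient case of (a) forbids. So $m_1,m_2,m_3$ are independent linear forms, hence a regular sequence in the four-variable ring $S$. Reading $\sum_k m_k\sigma_k=0$ coordinatewise, each triple $((\sigma_1)_j,(\sigma_2)_j,(\sigma_3)_j)$ is a linear syzygy on $(m_1,m_2,m_3)$, hence a $\K$-combination of the three Koszul syzygies (which span the three-dimensional degree-$2$ part of the syzygy module of a regular sequence). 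Substituting back into the identities $\sum_j(\sigma_k)_j g_j=0$ and solving the resulting Koszul-type equations, one obtains a linear form $U'$ so that, after a change of basis of the $g_j$, one has $g_1=m_3U',\ g_2=-m_2U',\ g_3=m_1U'$, the $\sigma_k$ are exactly the Koszul syzygies on $g_1,g_2,g_3$ (extended by zero on the other $g_j$), and $U'\ne 0$ (otherwise all $\sigma_k$ vanish). In particular $g_1,g_2,g_3$ span $U'\cdot\langle m_1,m_2,m_3\rangle$.

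Now combine numerics with the inverse system. Since $p=3$, counting the kernel of $S_1\otimes_{\K}\langle g_1,\dots,g_a\rangle\to S_3$ gives $\dim(I_2)_3=4a-3$; as $(I_2)_3\subseteq I_3$ and $\dim I_3=\dim S_3-4=16$ (the Hilbert function of $S/I$ being $(1,4,h_2,4,1)$), we get $4a-3\le 16$, so $a=4$. Write $I=I_F$ for a quartic $F$, and choose coordinates with $\langle m_1,m_2,m_3\rangle=\langle x_1,x_2,x_3\rangle$ and $x_0$ complementary. Because $g_i=m_iU'\in I_F$ for $i=1,2,3$, the cubic $H:=U'\circ F$ is annihilated by $m_1,m_2,m_3$, hence $H=c\,y_0^{3}$ with $y_0$ dual to $x_0$; and $c\ne 0$, otherwise $U'$ annihilates $F$ and $I_F$ would be degenerate. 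For an arbitrary quadric $Q\in I_F$, the product $QU'$ again lies in $I_F$, so $0=(QU')\circ F=Q\circ(U'\circ F)=Q\circ H$; writing $Q=\gamma x_0^2+x_0\ell+Q'$ with $\ell\in\langle x_1,x_2,x_3\rangle$ linear and $Q'\in\K[x_1,x_2,x_3]$ quadratic, one computes $Q\circ H=6c\gamma\,y_0$, forcing $\gamma=0$. Thus every quadric of $I_F$ lies in the ideal $(m_1,m_2,m_3)$. Since the quadrics of $I_F$ form a $4$-dimensional space while $\langle g_1,g_2,g_3\rangle$ is $3$-dimensional, there is a quadric $g_4\in I_F$ outside $\langle g_1,g_2,g_3\rangle$ and lying in $(m_1,m_2,m_3)$; then $U'g_4$ lies in the degree-$3$ part of $(g_1,g_2,g_3)=U'\cdot(m_1,m_2,m_3)$, say $U'g_4=\ell_1g_1+\ell_2g_2+\ell_3g_3$ with $\ell_i$ linear. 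This is a linear first syzygy of $I_2$ with nonzero ($=U'$) coefficient on $g_4$, hence independent of $\sigma_1,\sigma_2,\sigma_3$; so $p\ge 4$, contradicting $p=3$.

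The main obstacle is the structural step in (b): converting the single linear second syzygy into the normal form where $g_1,g_2,g_3$ span $U'\langle m_1,m_2,m_3\rangle$ (the regular-sequence Koszul computation), and then exploiting $m_iU'\in I_F$ so that $(I_F)_2$ must avoid the $x_0^2$-direction. The remaining ingredients — part (a), the reduction to a three-dimensional coefficient span, the bound $a=4$, and the production of the extra syzygy — are then routine.
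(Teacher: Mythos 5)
Your proof is correct, and while part (a) and the structural first half of (b) coincide with the paper's argument (the coefficient forms $m_1,m_2,m_3$ of the unique linear second syzygy must span a three-dimensional space, reading the relation coordinatewise against this regular sequence makes the first syzygies Koszul, and hence $I_2$ contains $U'\cdot\langle m_1,m_2,m_3\rangle$ for a nonzero linear form $U'$ --- the paper's $\{Lx_1,Lx_2,Lx_3\}$), your endgame is genuinely different. The paper stays homological: it shows a further quadric must be a nonzerodivisor modulo $(Lx_1,Lx_2,Lx_3)$ (the alternatives give a Koszul resolution or a fourth linear syzygy), computes the mapping-cone betti table of $I_2$, and argues that the resulting nonzero $Tor_4(R/I_2,\K)_6$ survives the addition of cubic generators, contradicting the Gorenstein betti table. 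You instead invoke Macaulay's inverse system: from $m_iU'\in I=\operatorname{Ann}(F)$ you get $U'\circ F=c\,y_0^3$ with $c\neq 0$ by nondegeneracy, deduce that every quadric of $I$ lies in $(m_1,m_2,m_3)$, and then multiply a fourth quadric by $U'$ to manufacture a fourth linear first syzygy, contradicting $b_{23}=3$ directly. Your route gives a more self-contained contradiction --- no mapping cone or Tor-persistence is needed, and it in fact excludes $v=(3,c)$ for every $c\ge 1$ once $a\ge 4$ --- at the price of using apolarity and characteristic zero explicitly, whereas the paper's Tor argument is the template it reuses throughout the case analysis of Theorem 2.2. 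Two small points to tighten: the passage from the Koszul-type equations to the normal form $g_1=m_3U'$, $g_2=-m_2U'$, $g_3=m_1U'$ deserves the short explicit computation (it also yields $U'\neq 0$ from the linear independence of the generators), and the claim that the $\sigma_k$ are ``exactly'' the Koszul syzygies should be phrased as: the Koszul relations on $g_1,g_2,g_3$ span the three-dimensional space of linear syzygies, so every linear syzygy has zero coefficient on $g_4$ --- which is all your final independence argument needs; also the deduction $a=4$ is a harmless detour, since $a\ge 4$ already supplies the fourth quadric.
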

	\begin{proof}
		To see that $v=(2,1)$ cannot occur, observe that if it did then there would be a unique relation $L_1 \cdot  V_1 + L_2 \cdot V_2 = 0$ where $L_1, L_2$ are linear forms, and $V_i$ are vectors of linear first syzygies. Changing variables so $L_1 = x_1 \mbox{ and }L_2=x_2$, we have that $x_1 \cdot V_{i1} + x_2 \cdot V_{i2} = 0$ for all $i$, implying $V_1$ is $x_2 \cdot C$ and $ V_2$ is $-x_1 \cdot C$, with $C$ a vector of constants, a contradiction. So $v=(2,1)$ is impossible.\\
		To prove part (b), the key point is that $v=(3,1)$ implies that $I_2$ contains 
		$\{Lx_1,Lx_2,Lx_3\}$ with $L$ a linear form. When $a \ge 4$ the mapping cone construction implies $I_2$ is inconsistent with the Gorenstein hypothesis (IGH). If $v=(3,1)$ then the unique linear second syzygy $S$ must have rank 3, otherwise the argument showing that $v=(2,1)$ is impossible applies. After change of variables, we may write $S$ as below, with $a_i,b_i,c_i$ linear forms:
		\begin {center}
		$\left[ \!
		\begin{array}{ccc}
			a_1 & b_1  &c_1   \\
			a_2 & b_2  &c_2   \\
			a_3 & b_3  &c_3   \\
			a_4 & b_4  &c_4
		\end{array}\! \right] \cdot 
		\left[ \!
		\begin{array}{c}
			x_1\\
			x_2\\
			x_3
		\end{array}\! \right] = 0$
	\end{center}
	So the rows of the matrix of linear first syzygies on $I_2$ are Koszul syzygies on $[x_1,x_2,x_3]^t$, that is to say
	\begin{align*}
		\begin{bmatrix}a_1 & b_1  &c_1   \\
			a_2 & b_2  &c_2   \\
			a_3 & b_3  &c_3   \\
			a_4 & b_4  &c_4\end{bmatrix}=C\begin{bmatrix}
			x_2 & -x_1 &0\\
			-x_3 & 0 & x_1\\
			0 & x_3 & -x_2
		\end{bmatrix}
	\end{align*}
	where $C$ is a full rank $4\times 3$ scalar matrix.
	This forces 
	$I_2$ to contain $\{Lx_1,Lx_2,Lx_3\}$. \\
	If $a \ge 4$, $I_2$ must contain a quadric $Q$ which is a nonzero divisor on $\{Lx_1,Lx_2,Lx_3\}$.
	To see this, note that if $Q \in \langle L\rangle$ then $\codim(I_2) =1$. After a change of variables $I_2$ consists of a linear form times a subset of the variables, so that $I_2$ has a Koszul resolution, hence $b_{45}(I_2) \ne 0$ which is IGH; if $Q \in \langle x_1,x_2,x_3 \rangle$ then there is at least one additional linear first syzygy, so $b \ge 4$.
	Now we know $Q$ must be a non-zero divisor on $\{Lx_1,Lx_2,Lx_3\}$. This implies that if $v=(3,1)$, then $I_2$ has mapping cone betti table
	\begin {center}
	$ \left[ \!
	\begin{array}{ccccc}
		1& 0 &0 &0 &0 \\
		0&  4 & 3  &1  &0 \\
		0&  0 & 3  &3  &1 
	\end{array}\! \right]. $
\end{center}
This is IGH, because $Tor_4(R/I_2,\K)_6 \ne 0$, and adding additional generators to $I_2$ cannot force cancellation: for a cubic $F$, we have the short exact sequence
\begin{equation}\label{sesCubic}
	0\longrightarrow R(-3)/I_2:F \longrightarrow R/I_2\longrightarrow R/I_2+F \longrightarrow 0
\end{equation}
and the associated long exact sequence gives exact sequence of vector spaces
\begin{align*}
	0\to Tor_{4}(R(-3)/I_2:F,\K)_{6}\to Tor_{4}(R/I_2,\K)_{6}\to Tor_{4}(R/I_2+F,\K)_{6}.
\end{align*}
Note that
\begin{align*}
	Tor_4 (R(-3)/I_2:F,\K)_6 = Tor_4 (R/I_2:F,\K)_3 = 0.
\end{align*}
Hence $Tor_{4}(R/I_2,\K)_{6}\ne 0$ implies $Tor_{4}(R/I_2+F,\K)_{6}\ne 0$.
Therefore we conclude $v=(3,1)$ is IGH.
\end{proof}
\begin{rmk}
When $a=3$, $v=(3,1)$ occurs.
\end{rmk}
Now we can prove Theorem 2.2:
\begin{proof}[Proof of Theorem 2.2:] We use the Hilbert function to establish the possible shape of the betti table, combined with an analysis of the structure of the subideal $I_2$ generated by the quadrics in $I$ and subideal $C_3$ generated by the quadrics and cubics in $I$. Let $a=b_{12}(I)$ be the number of quadratic generators of $I\subseteq R = \K[x_1,\ldots,x_4]$, and let $v=(b_{23}, b_{34})=(b,c)$, which are the number of linear first and second syzygies on $I_2$.  Note that $b_{45}(I_2) \ne 0$ is inconsistent with the Gorenstein hypothesis (IGH), so cannot occur. 

For an Artinian Gorenstein ideal $I$ with $\codim(I) = 4 = \reg(4)$ and fixed Hilbert series, $v$ determines the entire betti table. If $\codim(I_2)=1$ then after a change of variables $I_2$ consists of a linear form times a subset of the variables, so that $I_2$ has a Koszul resolution; in particular for $a \ge 4$ this is IGH. Similarly, $\codim(I_2)=4$ can only occur if $I_2$ contains a complete intersection. When $a \in\{0,1,2\}$ the analysis is straightforward, so we begin with $a=3$. 
\begin{enumerate}
	\item $a=3$: The Hilbert function is $(1,4,7,4,1)$ and a computation shows the betti table must be (dropping the $1$ in upper left and lower right corners) 
	\begin {center}
	$ \left[ \!
	\begin{array}{ccc}
		3 & b  &c   \\
		b+4 & 2c+12  &b+4 \\
		c     & b      & 3
	\end{array}\! \right]. $
\end{center}
By Macaulay's theorem
\[
h_2=7 = \binom{4}{2} + \binom{1}{1}, \mbox{ so } h_2^{\langle 2 \rangle} = 11 \ge h_3 = 20 -3 \cdot 4 +b, \mbox{ so } b\le 3.
\]
A direct computation shows that for an ideal generated by three quadratic monomials in $R$, $v \in \{(0,0), (1,0), (2,0), (3,1)\}$, all of which occur in Tables 1 and 2. By uppersemicontinuity, $I_2$ must have $v=(b,c) \le (b',c')$ for $(b',c')$ in the list above, so we need only show that $v \in \{(3,0), (2,1) \}$ do not occur. If $b=3$ then we are in the situation where Gotzmann's theorem applies, and we compute 
\[
h_3^{\langle 3 \rangle}=h_4 = 16 = 35-3\cdot 10 + 3 \cdot 4 -c + b_{24}(I_2). 
\]
In particular, $c \ge 1+b_{24}(I_2)$, so $c \ge 1$ and $v=(3,0)$ does not occur. By Lemma \ref{lem_v_neq_(2,1)}, $v=(2,1)$ is impossible. When $a \ge 4$, the set of betti tables possible for quadratic monomial ideals has an element that is so large that a similar analysis via the initial ideal becomes cumbersome.
\vskip .05in
\item $a=4$: The Hilbert function is $(1,4,6,4,1)$ and the betti table is:
\begin {center}
$ \left[ \!
\begin{array}{ccc}
	4 & b  &c   \\
	b & 2c+6  &b \\
	c     & b      & 4
\end{array}\! \right]. $
\end{center}
Values for $v$ which actually occur are $v \in \{(0,0), (2,0), (3,0), (4,1)\}$. Applying Macaulay's theorem to the ideal $I_2$ generated by the quadrics in $I$ shows $b \le 6$. Now let $C_3$ denote the ideal generated by the quadrics and cubics in $I$. 
\[
h_3(C_3) = h_3(I) = 4=\binom{4}{3} \mbox{ so } h_3^{\langle 3 \rangle}(C_3)=5 \ge h_4(C_3) = c+1.
\]
Hence $c \le 4$. The case $b=6$ is extremal, and applying Gotzmann's theorem we find
\[
h_4(I_2) =35-4\cdot 10+6 \cdot 4 +b_{24}(I_2) -c =15, \mbox{ so } c=4+b_{24}(I_2).
\]
Combined with our work above, this shows $b=6 \Rightarrow c=4$. As $h_4(C_3) = h_4(I)+4=5$, we have
\[
h_4^{\langle 4 \rangle}(C_3) = 6 \ge h_5(C_3) = 56-80+40+b_{25}(C_3)-6,
\]
we conclude $b_{25}(C_3)\le -4$, which is impossible. Thus, $b \in \{0, \ldots,5\}$. If $b \in \{0,1\}$ then $c=0$; clearly $v=(0,0)$ yields a complete intersection, which occurs, while $v=(1,0)$ leads to an almost complete intersection (ACI), and by \cite{Ku} there are no Gorenstein ACI's.  Henceforth we assume $b \in \{2,3,4,5\}$. 
We saw above that $c \le 4$; we now show that $c \in \{2,3,4\}$ is IGH. 
\[
h_5(C_3) = 56-80+4(c+6)+b_{25}(C_3)-b.
\]
So
\[
\begin{array}{ccc}
c=2 \implies h_4(C_3) =3 \implies h_4^{\langle 4 \rangle} (C_3) =3 & \ge & h_5(C_3) = 8+b_{25}(C_3)-b\\
c=3 \implies h_4(C_3) =4 \implies  h_4^{\langle 4 \rangle} (C_3) =4 & \ge & h_5(C_3) = 12+b_{25}(C_3)-b\\
c=4 \implies  h_4(C_3) =5 \implies  h_4^{\langle 4 \rangle} (C_3) =6 & \ge & h_5(C_3) = 16+b_{25}(C_3)-b
\end{array}
\]
As $b \le 5$, only the case $b=5, c=2, b_{25}(C_3)=0$ is possible; this has betti table 
\begin {center}
$ \left[ \!
\begin{array}{ccc}
4 & 5  &2   \\
5 & 10  &5 \\
2     & 5      & 4
\end{array}\! \right]. $
\end{center}
Computing, we find that in this situation $h_5(C_3)=3$, so
\[
h_5^{\langle 5 \rangle} (C_3) =3 \ge  h_6(C_3) = 84 - 140+80-20+b_{26}(C_3)-b_{36}(C_3).
\]
In particular, $b_{36}(C_3) \ge 1+b_{26}(C_3)$, which means the $5 \times 4$ submatrix  $M$ of $d_3$ representing  the “bottom right corner” of the table for $I$,
one of the four columns of $M$ is zero. By symmetry of the free resolution this means that one of the four rows of
the matrix $M^t$ of linear first syzygies on $I_2$ is zero. Hence the five linear first syzygies on $I_2$ only involve a subideal $J \subseteq I_2$ generated by 3 quadrics, which is impossible.  \newline

\noindent It remains to deal with $c \in \{0,1\}$. When $c=0$, we know $v \in \{(0,0),(2,0), (3,0)\}$ occur, and we have already shown that $v=(1,0)$ is IGH. As $b \le 5$, we need to show 
$v \in \{(4,0), (5,0)\}$ are IGH. To do this, we use the ideal $I_2$ of four quadrics; $h_3(I_2) = 20-16+b=4+b$, so we have
\[
\begin{array}{ccc}
b=4 \Rightarrow  h_3(I_2)=8 \Rightarrow h_3^{\langle 3 \rangle}(I_2)=10 & \ge h_4(I_2) = 35-40+16+b_{24}(I_2) = 11+ b_{24}(I_2) \\
b=5 \Rightarrow  h_3(I_2)=9 \Rightarrow h_3^{\langle 3 \rangle}(I_2)=12 & \ge h_4(I_2) = 35-40+20+b_{24}(I_2) = 15+ b_{24}(I_2),
\end{array}
\]
both of which force $b_{24}(I_2) \le -1$, which is impossible. When $c=1$, the only change to the second equation above is to subtract one (because $c=1$) from the right hand side, so $h_4(I_2)=14+b_{24}(I_2)$, forcing $b_{24} \le -2$, which is impossible. 


\vskip .15in
\item $a=5$:  The Hilbert function is $(1,4,5,4,1)$ so the betti table is
\begin {center}
$ \left[ \!
\begin{array}{ccc}
5 & b  &c   \\
b-4 & 2c &b-4 \\
c     & b      & 5
\end{array}\! \right]. $
\end{center}
Note that $h_3 = 20-5 \cdot 4 +b$, so $h_3 =b$. By Macaulay's theorem 
\[
h_2=5 = \binom{3}{2} + \binom{2}{1}, \mbox{ so } h_2^{\langle 2 \rangle} = 7 \ge h_3 = b, \mbox{ so }7 \ge b.
\]
If $b=7$, applying Gotzmann's theorem gives $c=b_{24}(I_2)+4$. Let $C_3$ denote the subideal of $I$ generated in degrees two and three; applying Macaulay's theorem to $h_3(C_3)=4$ yields
\[
5 \ge h_4(C_3) =35-5 \cdot 10 +4 \cdot 4 +c,
\]
so $c \le 4$; combined with $c = b_{24}(I_2)+4$ this forces $c=4$. Since $h_4^{\langle 4 \rangle}(C_3)=6$, we find
\[
6 \ge h_5(C_3) = 56-5\cdot 20 + 4\cdot 10 +4 \cdot 4 +b_{25}(C_3)-3 = 9+b_{25}(C_3).
\]
This shows $b_{25}(C_3) \le -3$, hence $b=7$ is IGH, and $b \in \{4,5,6\}$. \newline
\mbox{  }Case 1: Suppose $b=4$. This means there are no cubics in the ideal, and 
\[
h_3=4 = \binom{4}{3} \mbox{ so }h_3^{\langle 3 \rangle} = 5 \ge h_4 = 35-5\cdot 10 + 4 \cdot 4 + c.
\]
We conclude $c \le 4$. We can immediately rule out $c=0$, as then $I$ would be an ACI, which is IGH. The possibilities $c \in \{2,3,4\}$ are also ruled out by Macaulay; we illustrate for $c=2$:
\[
h_4 = 35-5 \cdot 10 + 4 \cdot 4 +2 = 3, \mbox{ so } h_4^{\langle 4 \rangle} = 3 \ge h_5 = 4+b_{25}(I_2),
\]
which would force $b_{25}(I_2) \le -1$. Finally, suppose $c=1$, so $I=I_2+q$ for a single quartic $q$. Since $I_2+q$ has codimension four, the codimension of $I_2$ must be three or four, and if $\codim(I_2)=4$ then $I_2$ contains a complete intersection $C$. We claim this is impossible: write $I_2 = C+f$ with $f \in I_2 \setminus C$. Since $b_{23}(C)=0$ the fact that $b_{23}(I_2)=4$ means that $C:f=\langle x_1,x_2,x_3,x_4\rangle$, whose mapping cone is inconsistent with the betti table for $I_2$. Hence $\codim(I_2)=3$, and $q$ is a nonzero divisor on the codimension three associated primes of $I_2$. Since $h_4(I_2)=2$, Macaulay's theorem implies the degree of $I_2$ is one or two. Observe that the rank of the linear second syzygy $S$ cannot be 4; if it was then $S=[x_1,x_2,x_3,x_4]^t$. By the symmetry of the differentials in the free resolution, this means that $I_2:q=\langle x_1,\ldots, x_4\rangle$. By additivity of the Hilbert polynomials on the short exact sequence
\[
0 \longrightarrow R(-4)/(I_2:q) \longrightarrow R/I_2 \longrightarrow R/I \longrightarrow 0,
\]
this is impossible. Hence $\rank(S) =3$, and as in the proof that $v=(3,1)$ is impossible for $a =4$, $I_2$ must contain, after a change of variables,  $\{L\cdot x_1, L \cdot x_2, L\cdot x_3\}$ for a linear form $L$.
Since $\codim(I_2)=3$, this forces $L, q_4,q_5$ to be a regular sequence. In particular, $\deg(I_2)=4$, a contradiction. \newline
\mbox{  }Case 2: Suppose $b=5$. The cases $v \in \{(5,0),(5,1)\}$ do occur. 
\[
h_3=5 = \binom{4}{3} + \binom{2}{2} \mbox{ so }h_3^{\langle 3 \rangle} = 6 \ge h_4 = 35-5\cdot 10 + 4 \cdot 5 + b_{24}(I_2)-c.
\]
So $c+1 \ge b_{24}(I_2)$. Let $C_3$ denote the subideal of $I$ generated in degrees two and three.
\[
h_3(C_3)=4\mbox{ so }h_3^{\langle 3 \rangle}=5, \mbox{ thus } 5 \ge h_4 = 35-50+16+c,
\]
implying $c \le 4$. Since $c \in \{0,1\}$ does occur, we need to rule out $c \in \{2,3,4\}$. Computing values for $h_4$, we find
\[
\begin{array}{ccc}
c=2\mbox{ implies }&h_4 =3 &\mbox{ hence }h_5 \le 3\\
c=3\mbox{ implies }&h_4 =4 &\mbox{ hence }h_5 \le 4\\
c=4\mbox{ implies }&h_4 =5 &\mbox{ hence }h_5 \le 6
\end{array}
\]
Since $h_5 = 56-100+40+4c-1+b_{25}(C_3)$, combining this with the above shows
\[
\begin{array}{ccc}
c=2\mbox{ implies }&h_5 =3+b_{25}(C_3) & \le 3\\
c=3\mbox{ implies }&h_5 =7+b_{25}(C_3) & \le 3\\
c=4\mbox{ implies }&h_5 =11+b_{25}(C_3) & \le 6
\end{array}
\]
This rules out $c \in \{3,4\}$, and shows if $c=2$ then $b_{25}(C_3)=0$. So in this case $h_5(C_3)=3$,  and
\[
h_5^{\langle 5 \rangle}(C_3) = 3 \ge h_6(C_3) = 84-175+80+20-4+b_{26}(C_3)-b_{36}(C_3).
\]
In particular, we have $3 \ge 5 +b_{26}(C_3)-b_{36}(C_3)$, hence $b_{36}(C_3) \ge 2$, so the betti table for $C_3$ is at least
\begin {center}
$ \left[ \!
\begin{array}{ccc}
5 & 5  &2   \\
1 & 4 &1 \\
0     & 0     & 2
\end{array}\! \right]. $
\end{center}
Hence in the $5 \times 5$ submatrix  $M$ of $d_3$ representing  the “bottom right corner” of the table for $I$,
two of the five columns of $M$ are zero, which by symmetry of the betti table means that two of the five rows of
the matrix $M^t$ of linear first syzygies on $I_2$ are zero. Hence the five linear first syzygies on $I_2$ only involve a subideal $J \subseteq I_2$ generated by 3 quadrics, which is impossible.  \newline
\mbox{  }Case 3: Suppose $b=6$; the only case that actually occurs is $v=(6,2)$. 
\[
h_3=6 = \binom{4}{3}+ \binom{2}{2} +\binom{1}{1} \mbox{ so }h_3^{\langle 3 \rangle} = 7 \ge h_4 = 35-5\cdot 10 + 4 \cdot 6 + b_{24}(I_2)-c.
\]
So $c  \ge b_{24}(I_2)+2$.  Let $C_3$ denote the subideal of $I$ generated in degrees two and three.
\[
h_3(C_3) = 4 = \binom{4}{3} \mbox{ so }5 \ge h_4 = 35-50+16+c
\]
Thus, $c \le 4$. To show that $c \in \{3,4\}$ do not occur, we compute
\[
\begin{array}{ccc}
\mbox{If }c=4, \mbox{ then }&h_4(C_3)=5 \mbox{ and }&h_5(C_3) \le 6 \\
\mbox{If }c=3, \mbox{ then }&h_4(C_3)=4 \mbox{ and }&h_5(C_3) \le 4 
\end{array}
\]
Since $h_5(C_3)=56-100+40+4c +b_{25}(C_3)-2$, we see that 
\[
\begin{array}{ccc}
\mbox{If }c=4&\mbox{ then } h_5 = 10+b_{25}& \le 6, \mbox{ so } b_{25}(C_3) \le -4\\
\mbox{If }c=3&\mbox{ then } h_5 = 6+b_{25}& \le 4, \mbox{ so } b_{25}(C_3) \le -2
\end{array}
\]
We have shown that when $b=6$, the only value possible for $v$ is $(6,2)$.\newline
\vskip .1in
\item $a=6$:  The Hilbert function is $(1,4,4,4,1)$ so the betti table is
\begin {center}
$ \left[ \!
\begin{array}{ccc}
6 & b  &c   \\
b-8 & 2c-6  &b-8 \\
c     & b      & 6
\end{array}\! \right]. $
\end{center}
As 
\[
h_2=4 = \binom{3}{2} + \binom{1}{1}, \mbox{ Macaulay's theorem shows } h_2^{\langle 2 \rangle} = 5 \ge h_3 = 20 -6 \cdot 4 +b.
\]
So $b \le 9$. If $b=9$ there is a unique cubic $F \in I$; since $b=9$ is extremal we may apply Gotzmann's theorem, yielding $b_{24}(I_2)=c-5$. Since $(2c-6) -(c-5)=c-1$ and $c \ge 5$, this means there are always at least four independent syzygies which are linear on $F$ and quadratic on elements of $I_2$.  Hence $I_2 :F=\langle x_1,\ldots x_4\rangle$ and the mapping cone arising from short exact sequence 
\[
0\longrightarrow S(-3)/I_2:F \longrightarrow S/I\longrightarrow S/I_2+F \longrightarrow 0,
\]
gives a resolution of $S/I$. The top row of the mapping cone is simply the Koszul complex on the variables, and a check of the degrees shows the second syzygies involve a summand $R^6(-5)$ which cannot cancel. This would imply $b_{35}(I)=b-8 \ge 6$, which is impossible since $b=9$. 

Finally, we need to show that when $b=8$ we must have $c=3$. From the Hilbert function constraint on the betti table, $c \ge 3$.  When $b=8$, there are no cubics in $I$; this means 
\[
b_{24}(I_2)-c = c-6.
\] 
We compute 
\[
h_3=4=\binom{4}{3}\mbox{ so }h_3^{\langle 3 \rangle} = 5 \ge h_4 = 35 -6 \cdot 10 +8\cdot 4 +c-6,
\]
hence $c \le 4$. Finally, if $c=4$, then $h_4=5$ and $h_4^{\langle 4 \rangle }= 6$. So 
\[
6 \ge h_5 = 56-6\cdot 20+8\cdot10-2\cdot4+b_{25}(I_2).
\]
This would force $b_{25}(I_2) \le -2$. We have shown that the only betti table possible for $a=6$ is
\begin {center}
$ \left[ \!
\begin{array}{ccc}
6 & 8  &3   \\
0 & 0  &0 \\
3     & 8      & 6
\end{array}\! \right]. $
\end{center}
\end{enumerate}
Hence there are 16 betti tables for an Artin Gorenstein algebra $A$ with $\reg(A)=4 = \codim(A)$. All diagrams in Table 1 and Table 2 do occur, which can be checked via a {\tt Macaulay2} search. \end{proof}

\section{Gorenstein deviation two} 
The {\em deviation} of an ideal $I$ is the number of generators of
$I$ minus the codimension of $I$. Complete intersections are the simplest 
Gorenstein rings, and have deviation zero; in \cite{Ku}, Kunz shows a Gorenstein ring cannot have deviation one.
In this section, we study Gorenstein rings of deviation two. This is similar to the codimension three case, where the classification of \cite{BE} shows that such ideals come from Pfaffians.
In \cite{hu}, Huneke-Ulrich give a construction for Gorenstein rings
of deviation two; Let $Y$ be a $2n \times 2n$ skew
symmetric matrix of variables, and $X$ a $1 \times 2n$ vector of variables. Then
the ideal generated by the quadrics in $Y\cdot X$ plus the Pfaffian of
$Y$ is Gorenstein deviation two. Such an ideal will have regularity
four iff $n=3$, and we analyze this case in \S3.2. The corresponding 
GoCY threefold $X \subseteq \p^8$ has Hodge numbers different 
from the $h^{p,q}(X)$ for any $X \subseteq \p^n$ with $n \le 7$. 

By the Buchsbaum-Eisenbud theorem, the Pfaffians of a skew $5
\times 5$ matrix $M$ also have deviation two, and quotienting such an
ideal by a regular sequence preserves the deviation two
property. If $M$ is a matrix of linear forms, in order to have regularity four, the regular sequence must consist of
two quadrics or a single cubic; if $M$ has linear and quadratic entries, the regular sequence is a single quadric. 
We analyze these ideals in \S 3.3. 

\subsection{Huneke-Ulrich ideals}
\vskip .1in
\noindent Let $I$ be the ideal consisting of the six quadrics of $Y\cdot X$
\[
\left[ \!
\begin{array}{cccccc}
0 & -y_{12} & \cdots & \cdots& \cdots &-y_{16}\\
y_{12} &0 &-y_{23} & \cdots & \cdots &\cdots\\
\cdots &y_{23}&0 & -y_{34} & \cdots&\cdots\\
\cdots & \cdots& y_{34} &0 &-y_{45}&\cdots\\
\cdots & \cdots& \cdots       &y_{45} &0 &-y_{56}\\
y_{16} & \cdots& \cdots &\cdots& y_{56}&0
\end{array}\! \right]
\cdot
\left[ \!
\begin{array}{c}
x_1\\
x_2\\
x_3\\
x_4\\
x_5\\
x_6
\end{array}\! \right]=0
\]
along with the cubic Pfaffian of $Y$. By \cite{hu}, $I$ is
Gorenstein, with $V(I)$ of dimension 15 in $\p^{20}$, and is
singular in dimension 8. Therefore quotienting by a regular
sequence of 12 linear forms yields a smooth GoCY threefold in $\p^8$
with betti diagram identical to that of $I$:
\begin{small}
\[
\vbox{\offinterlineskip 
\halign{\strut\hfil# \ \vrule\quad&# \ &# \ &# \ &# \ &# \ &# \
&# \ &# \ &# \ &# \ &# \ &# \ &# \
\cr 
total&1&7&22&22&7&1\cr 
\noalign {\hrule}
0&1 &--&--& --&--& --&   \cr 
1&--&6 &1 & --&--&--&        \cr 
2&--&1 &21 & 21&1&--&        \cr 
3&--&-- &-- &1&6&--&         \cr 
4&--&-- &-- &--&--&1&         \cr 
\noalign{\bigskip}
\noalign{\smallskip}
}}
\]
\end{small}
\noindent This yields a GoCY threefold in $\p^8$, whose Artinian reduction has $H$-vector $(1,5,9,5,1)$. So $\deg(X)=21$, and a {\tt Macaulay2} computation shows that for a generic $X$ the Hodge numbers are
\[
\begin{array}{ccccccc}
&&&1&&& \\
&&0&&0&& \\
&0&&1&&0& \\
1  &&52&&52&&1 \\
&0&&1&&0& \\
&&0&&0&& \\
&&&1&&&
\end{array}
\]
\noindent These are not the $h^{p,q}(X)$ for any GoCY $X$ of codimension $\le 4$, and correspond to a point on the line connecting the Hodge numbers of the degree 13 and 14 examples of Tonoli--see Figure 1 in \cite{T}. While $X$ can be projected to a smooth CY in $\p^7$, it is no longer projectively normal (hence not a GoCY). 
\subsection{Quotients of Pfaffians, I}
A second class of Gorenstein ideals of deviation two
and regularity four come from quotienting the Pfaffians $Pf(M)$ of a $5
\times 5$ matrix $M$ of linear forms by  a regular sequence.  If $M$ is
generic, then $Pf(M)$ is codimension 3 in $\p^9$, and is smooth. Quotienting by a regular sequence of 
degrees $\{ 2,2,1 \}$  yields a smooth GoCY threefold $ X \subseteq \p^8$ of degree 20, with betti table
\begin{small}
$$
\vbox{\offinterlineskip 
\halign{\strut\hfil# \ \vrule\quad&# \ &# \ &# \ &# \ &# \ &# \
&# \ &# \ &# \ &# \ &# \ &# \ &# \
\cr 
total&1&7&16&16&7&1\cr 
\noalign {\hrule}
0&1 &--&--& --&--& --&   \cr 
1&--&7 &5 & --&--&--&        \cr 
2&--&-- &11 & 11&--&--&        \cr 
3&--&-- &-- &5&7&--&         \cr 
4&--&-- &-- &--&--&1&         \cr 
\noalign{\bigskip}
\noalign{\smallskip}
}}
$$
\end{small}
\noindent For the generic case, $h^{1,1}(X)=1$ and $h^{1,2}(X)=61$, which are attained by Tonoli's degree 13 example. On the other hand, if we quotient the pfaffians of $M$ by a generic cubic, this yields an ideal $I$ with betti table CGKK 2.
Quotienting with two generic linear forms yields a smooth GoCY $X \subseteq \p^7$ of degree $15$, discovered in \cite{vEvS}, which has $h^{1,1}(X)=1$ and $h^{1,2}(X)=76$. 

\subsection{Quotients of Pfaffians, II}
We now show that the betti diagram of Type 2.4 corresponds to a mapping cone, and that any nondegenerate irreducible GoCY in $\p^7$ with betti table of Type 2.4 must be singular. Recall Type 2.4 has betti diagram 
\begin{small}
$$
\vbox{\offinterlineskip 
\halign{\strut\hfil# \ \vrule\quad&# \ &# \ &# \ &# \ &# \ &# \
&# \ &# \ &# \ &# \ &# \ &# \ &# \
\cr 
total&1&6&10&6&1\cr 
\noalign {\hrule}
0&1&-- &--&--&-- &\cr 
1&--&4&2 &--& --  &     \cr 
2&--&2&6 &2 &--  &    \cr 
3&--&--&2 &4&--     &   \cr 
4&--&--&--&-- &1      &   \cr 
\noalign{\bigskip}
\noalign{\smallskip}
}}
$$
\end{small}\vskip -.2in
A key tool in our analysis is a result of Vasconcelos-Villereal \cite{VV}, which shows that if $R$ is a Gorenstein local ring and $2\in R$ is a unit, then if $I$ is a Gorenstein ideal of codimension $4$ and deviation two, such that $I$ is a generic complete intersection (the localization at all minimal primes is a complete intersection), then $I$ is a hypersurface section of a Gorenstein ideal of height $3$. Table 1 and Table two contain two examples of Gorenstein ideals of codimension four and deviation two: CGKK 2, and Type 2.4; the first can be obtained by quotienting the Pfaffians of a skew matrix of linear forms by a cubic. 
\pagebreak

\begin{thm} A nondegenerate irreducible threefold in $\p^7$ with betti diagram of Type 2.4 is singular.
\end{thm}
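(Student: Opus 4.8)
The plan is to present the ideal of $X$ via the structure theorem of \cite{VV} quoted above, read off the Buchsbaum--Eisenbud data, and then exhibit a forced singular line. Write $S=\K[x_0,\ldots,x_7]$ and let $I\subseteq S$ be the homogeneous ideal of such a threefold. From the Type 2.4 diagram one has $\codim(I)=4$, the projective dimension of $S/I$ is $4$ so $S/I$ is Cohen--Macaulay, the last free module in its resolution has rank $1$ so $S/I$ is Gorenstein, and $I$ has six minimal generators, hence deviation two. Since $X$ is irreducible, $I$ is prime, so $IS_I$ is the maximal ideal of the regular local ring $S_I$ and $I$ is a generic complete intersection; hence \cite{VV} applies (over the localization, then homogenizing) and gives $I=(J,f)$ with $J$ a codimension three Gorenstein ideal and $f$ homogeneous, a nonzerodivisor on $S/J$.

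Next I would pin down the grading. Since $f$ is a nonzerodivisor modulo $J$, the exact sequence $0\to(S/J)(-\deg f)\xrightarrow{f}S/J\to S/I\to 0$ produces a mapping cone with no cancellation, so $b_{i,j}(S/I)=b_{i,j}(S/J)+b_{i-1,\,j-\deg f}(S/J)$. Matching this against Type 2.4 and using self-duality of the resolution of the codimension three Gorenstein ring $S/J$ forces $\deg f=2$: $\deg f=1$ is ruled out by $b_{1,1}(S/I)=0$, $\deg f\ge 4$ is incompatible with $I$ having six generators, and $\deg f=3$ would force a scalar syzygy among the minimal generators of $J$. One then reads off that $J$ has three quadratic and two cubic generators, $h$-vector $(1,3,3,1)$, and resolution ending in $S(-6)$. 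By Buchsbaum--Eisenbud, $J=\Pfaff(M)$ for a $5\times5$ skew-symmetric matrix $M$, and graded self-duality forces the $(i,j)$ entry of $M$ to have degree $6-g_i-g_j$, where $(g_1,\ldots,g_5)=(2,2,2,3,3)$ are the generator degrees. Thus, after a change of basis, $M=\left[\begin{smallmatrix}N&L\\ -L^{t}&0\end{smallmatrix}\right]$ with $N$ a $3\times3$ skew matrix of quadrics, $L$ a $3\times2$ matrix of linear forms, and the lower $2\times2$ block zero; the scalar occupying that block must vanish, since otherwise a graded congruence block-diagonalizes $M$ and collapses $\Pfaff(M)$ to the ideal of the three entries of $N$, contradicting $\mu(J)=5$. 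With this shape the three quadratic Pfaffians of $M$ are the $2\times2$ minors of $L$, and each cubic Pfaffian equals $\sum_k n_k\ell_k$ for a fixed triple $n_1,n_2,n_3$ of entries of $N$ and a column $(\ell_1,\ell_2,\ell_3)$ of $L$.

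Now set $\Lambda=V(L)\subseteq\p^7$, the common zero locus of the six entries of $L$; it is a linear subspace of dimension at least $7-6=1$. Every Pfaffian of $M$ vanishes on $\Lambda$, so $\Lambda\subseteq V(J)$, and $V(J)$ is singular along $\Lambda$: at $p\in\Lambda$ each quadratic Pfaffian (a sum of products of entries of $L$) has vanishing gradient, while each cubic Pfaffian has gradient $\sum_k n_k(p)\,d\ell_k$, so the Jacobian of the five Pfaffians at $p$ has rank at most $2$ and $\dim T_pV(J)\ge 7-2=5>4=\dim V(J)$. Since $f$ is a nonzerodivisor on the Cohen--Macaulay ring $S/J$, $X=V(J)\cap V(f)$ is pure of dimension three; as $\Lambda$ has positive dimension it meets the quadric hypersurface $V(f)$, and for $p\in\Lambda\cap V(f)\subseteq X$ one gets $\dim T_pX\ge\dim T_pV(J)-1\ge 4>3$. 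Therefore $X$ is singular at $p$.

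I expect the main obstacle to be the first two steps: passing rigorously from the local statement of \cite{VV} to the graded decomposition $I=(J,f)$, and carrying out the betti-number bookkeeping --- in particular the argument that the lower $2\times2$ block of $M$ vanishes --- carefully enough to be sure $M$ has exactly the asserted block shape. Once the shape of $M$ is established, the singular line $\Lambda$ and the final conclusion follow directly.
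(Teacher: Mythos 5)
Your argument is correct, and it reaches the same endgame as the paper's Proposition~\ref{bettiSkew3223}: reduce via \cite{VV} and Buchsbaum--Eisenbud to $I=\Pfaff(M)+\langle f\rangle$ with $M$ a $5\times 5$ skew matrix whose graded structure (entry degrees $6-g_i-g_j$, vanishing scalar block) is forced by minimal generation, observe that the Pfaffian locus is singular along the linear space where the six linear entries vanish because the Jacobian there has rank at most $2$, and then intersect with the quadric $V(f)$. The differences are in the reduction, and they amount to a genuine streamlining. The paper first invokes Theorem 1.7 of \cite{ss} (Lemma~\ref{quadricsinI}) to locate the $2\times 2$ minors of a 1-generic $2\times 3$ matrix inside $I_2$, splits into cases according to whether its linear entries span a space of dimension $4$, $5$ or $6$ (handling the first two by a cone argument), and applies \cite{VV} only in the $6$-dimensional case, verifying the generic-complete-intersection hypothesis from the explicit linear syzygies. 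You instead observe that for a prime ideal this hypothesis is automatic ($IS_I$ is the maximal ideal of the regular local ring $S_I$, hence generated by a regular system of parameters), apply \cite{VV} immediately, and your singular locus $\Lambda=V(L)$ has dimension at least one whether or not the linear entries are independent, so both the appeal to \cite{ss} and the case analysis on the span become unnecessary; your final tangent-space estimate $\dim T_pX\ge \dim T_pV(J)-1\ge 4>3$ is the same intersection step the paper performs with $V(q_4)$. Two caveats, neither fatal: the grading bookkeeping you flag (ruling out $\deg f\in\{1,3\}$ and extracting the generator degrees $(2,2,2,3,3)$ of $J$) is done most cleanly as in Lemma~\ref{3223betti}, by comparing Hilbert numerators --- $(1-t^{\deg f})$ must divide $(1-t^2)^4$ and nondegeneracy excludes $\deg f=1$ --- which sidesteps your unproved claim that the mapping cone has no cancellation (that claim is in fact verifiable here, since the degree sets of $F_i$ and $F_i(-2)$ are disjoint, but it deserves the argument); and the passage from the local statement of \cite{VV} to a homogeneous decomposition $I=(J,f)$ (with $2$ a unit) is asserted rather than proved both by you and by the paper, so it is a shared, not a new, gap.
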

\noindent We start with several preparatory lemmas. Note that a betti diagram of Type 2.4 cannot arise as the mapping cone of a cubic, so will arise from quotienting the Pfaffians by a quadric. 
\begin{lem}\label{quadricsinI}
There is a prime subideal $J \subseteq I_2$ generated by three quadrics, such that $J$ consists of the $2 \times 2$ minors of a 1-generic $2 \times 3$ matrix $M$, and the quadric $q_4 \in I_2\setminus J$ is a nonzero divisor on $R/J$. 
\end{lem}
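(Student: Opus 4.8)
The plan is to read the structure of the quadric subideal $I_2$ off the Type 2.4 diagram and then feed it into the three structural results of \cite{ss} quoted in the proof of Theorem 2.2, exactly as was done there for Types 2.1, 2.2, 2.3 and 2.8. Write $I = I(X)$, which is prime and nondegenerate since $X$ is irreducible and nondegenerate. From the diagram one reads $b_{12}(I) = 4$, so $I$ contains a $4$-dimensional space of quadrics, and $b_{23}(I) = 2$. By minimality a degree-three first syzygy of $I$ involves only the four quadratic generators, so the two linear first syzygies of $I$ are linear first syzygies of $I_2$; fix one of them and call it $s$.

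First I would determine $\rank(s)$. Since $I$ is prime, part $(1)$ of Theorem 1.7 of \cite{ss} forbids $\rank(s) \le 2$, and $\rank(s) \le 4$ because there are only four quadratic generators. The crucial point is to rule out $\rank(s) = 4$: were it to hold, part $(3)$ of that theorem would force $I$ to contain the five submaximal Pfaffians of a skew-symmetric $1$-generic $5 \times 5$ matrix of linear forms, and by \cite{BE} these are the five minimal generators of a codimension-three Gorenstein ideal, hence linearly independent; this would give $I$ a $5$-dimensional space of quadrics, contradicting $b_{12}(I) = 4$. Therefore $\rank(s) = 3$.

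Now part $(2)$ of \cite{ss} applies to $s$ and produces a $1$-generic $2 \times 3$ matrix $M$ of linear forms with $J := I_2(M) \subseteq I$. As the three minors are quadrics they lie in $I_2$, and $1$-genericity of $M$ forces them to be linearly independent, so $J$ contains exactly a $3$-dimensional space of quadrics. Since $I_2$ contains the same $4$-dimensional space of quadrics as $I$, I would complete the three minors to a minimal generating set of $I_2$ with one further quadric $q_4$, so that $I_2 = J + (q_4)$ and $q_4 \notin J$. Finally $J$ is prime: the ideal of maximal minors of a $1$-generic matrix of linear forms defines an irreducible, arithmetically Cohen--Macaulay variety of the expected codimension; see \cite{E}. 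Hence $R/J$ is a domain, and as $q_4 \notin J$ it is a nonzerodivisor on $R/J$.

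I expect the main obstacle to be the rank computation in the second step, and in particular the exclusion of $\rank(s) = 4$: that is exactly where the Pfaffian alternative $(3)$ of \cite{ss}, combined with the bound $b_{12}(I) = 4$ dictated by the Type 2.4 diagram, is doing the real work. Everything downstream --- extracting the three minors, choosing $q_4$, and concluding it is a nonzerodivisor --- is routine bookkeeping plus the classical primeness of $1$-generic maximal minors.
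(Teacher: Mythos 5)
Your core argument coincides with the paper's: both proofs run the rank analysis on a linear first syzygy of $I_2$ via Theorem 1.7 of \cite{ss}, excluding rank $\le 2$ by primeness of $I$ and rank $4$ because the five Pfaffians of a $1$-generic $5\times 5$ skew matrix cannot fit inside the $4$-dimensional space of quadrics (the paper leaves this count implicit; your appeal to \cite{BE} for independence of the Pfaffians is at the same level of detail the paper itself uses elsewhere, e.g.\ in the $c=5$ case of Theorem 2.2), so rank $3$ and part (2) produce the $1$-generic $2\times 3$ matrix $M$. Where you diverge is in the last two claims. For primeness of $J=I_2(M)$ you quote the classical theorem that maximal minors of a $1$-generic matrix of linear forms are prime of expected codimension --- which is correct, but it lives in \cite{e3} (Geometry of Syzygies, Ch.~6) or Eisenbud's 1988 paper rather than \cite{E}, and needs the field algebraically closed (or at least infinite); the paper instead argues intrinsically that a non-prime $J$ of codimension $2$ and degree $3$ would have all components of degree $1$ or $2$, hence degenerate, forcing the irreducible $V(I)\subseteq V(J)$ to be degenerate. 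For the nonzerodivisor claim your route is genuinely shorter: once $J$ is prime and $q_4\notin J$ (which holds since $J$ is generated in degree $2$ by the three independent minors), $q_4$ is automatically regular on $R/J$; the paper instead runs a separate argument bounding $\codim(I_2)$ and comparing $\deg(I)=16$ against $9$ or $18$. Your streamlining is valid and arguably cleaner, at the cost of importing the $1$-generic primeness theorem; the paper's version is self-contained and independent of that result.
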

\begin{proof}
By Theorem 1.7 of \cite{ss}, a linear first syzygy on $I_2$ of rank four would imply that $I_2$ contains the Pfaffians of a $5 \times 5$ skew matrix of linear forms, while if there was a linear first syzygy on $I_2$ of rank two, $I$ would not be prime. So Theorem 1.7 implies that $I_2$ contains a subideal $J$ of  $2 \times 2$ minors of a 1-generic $2 \times 3$ matrix of linear forms. The ideal $J$ must be prime, for if not, it would have a primary decomposition into components of degrees one or two, which would force $I$ to be degenerate. Finally, $q_4$ is regular on $J$, for if not, then $\codim(I_2)=2$ and degree one or two; the two cubics in $I$ must be nonzero divisors on the codimension two primary component, because $\codim(I)=4$. But this would imply that $\deg(I)$ is $9$ or $18$, contradicting the fact that $\deg(I)=16$. 
\end{proof}
\noindent In what follows, we use the notation of Lemma ~\ref{quadricsinI}, so $J$ is the ideal of $2 \times 2$ minors of the one-generic matrix $M$. The entries of $M$ are linear forms, because $J$ is prime the linear forms span a space of dimension $\{4,5,6\}$. This means $V(J)$ is a cone, with singular locus of dimension (respectively) $\{3,2,1\}$. Let $C$ be the ideal generated by $q_4$ and the two cubic generators of $I$; intersecting $V(J)$ with $V(C)$ drops the dimension by two, so if the linear forms of $M$ span a space of dimension four or five, $V(I)$ is singular. It remains to deal with the case that the span of the linear forms has dimension six; after a change of variables we may assume 
\begin{align*}
M=\begin{bmatrix}
x_1 & x_2 & x_3\\
x_4 & x_5 & x_6
\end{bmatrix}
\end{align*}

\begin{lem}\label{vvapplication}
Let $I$ be a codimension four Gorenstein prime ideal with betti diagram Type 2.4. If $I_2$ contains an ideal $J$ consisting of the $2 \times 2$ minors of $M$ as above, then $I=I'+\langle F \rangle,$ with $\codim(I')=3$ and $I'$ Gorenstein, and $F$ a nonzero divisor on $R/I'$. Hence $R/I$ has a mapping cone resolution.
\end{lem}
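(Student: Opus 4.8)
The plan is to invoke the Vasconcelos--Villarreal theorem quoted above and then build a mapping cone. First I would check its hypotheses. The Type~2.4 table shows that $I$ is minimally generated by $b_{12}+b_{23}=4+2=6$ forms, so, with $\codim(I)=4$, the ideal $I$ has deviation two; $I$ is Gorenstein by assumption; and $2$ is a unit since $\operatorname{char}\K=0$. The only hypothesis requiring an argument is that $I$ be a generic complete intersection, i.e. that $I_{\mathfrak p}$ be a complete intersection for every minimal prime $\mathfrak p$ over $I$. Because $I$ is prime, the sole such $\mathfrak p$ is $I$ itself, and $R_I$ is a regular local ring of dimension $\codim(I)=4$ (a localization of the polynomial ring at a prime), so $IR_I$ is its maximal ideal and hence is generated by a regular sequence of length four; thus $I$ is a generic complete intersection.

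Granting the hypotheses, Vasconcelos--Villarreal --- applied after localizing at the homogeneous maximal ideal, with the re-homogenization being routine since the intermediate complete intersection can be taken homogeneous --- produces a Gorenstein ideal $I'$ with $\codim(I')=3$ together with a form $F$ which is a nonzerodivisor on $R/I'$ and satisfies $I=I'+\langle F\rangle$. This is the first assertion. Since $I$ is minimally generated in degrees two and three and (as recorded before Lemma~\ref{quadricsinI}) a Type~2.4 diagram is not the mapping cone of a cubic, a graded Nakayama argument shows $F$ is a minimal generator of $I$ and forces $\deg F=2$; by Buchsbaum--Eisenbud $I'$ is then the ideal of $4\times4$ Pfaffians of a $5\times5$ skew-symmetric matrix of forms, with a self-dual minimal free resolution of length three, whose generator degrees $(3\text{ quadrics},\,2\text{ cubics})$ are consistent with the Type~2.4 table.

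For the final sentence I would use the short exact sequence
\[
0 \longrightarrow (R/I')(-2) \xrightarrow{\,\cdot F\,} R/I' \longrightarrow R/I \longrightarrow 0,
\]
which is exact precisely because $F$ is a nonzerodivisor on $R/I'$. Lifting multiplication by $F$ to a chain map from the $(-2)$-twist of the minimal resolution of $R/I'$ to the minimal resolution of $R/I'$ itself, the mapping cone of this lift resolves $R/I$ and has length $\max(3,\,3+1)=4=\codim(I)$. It is in general not minimal, but it is the mapping cone resolution claimed.

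I expect the only genuinely delicate point to be the verification of the generic complete intersection hypothesis together with the (pedantic) passage from the local statement of Vasconcelos--Villarreal to our graded setting; everything else is numerical bookkeeping from the Type~2.4 table plus the formal mapping cone construction. I would also note that the hypothesis $J\subseteq I_2$ plays no role in this particular argument --- it merely isolates the outstanding case (the entries of $M$ spanning a six-dimensional space) left over from the dimension count preceding Lemma~\ref{quadricsinI}, which is the case the subsequent smoothness argument must treat.
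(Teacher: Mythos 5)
Your argument is correct and follows the same skeleton as the paper's proof---check the hypotheses of the Vasconcelos--Villarreal theorem, apply it to get $I=I'+\langle F\rangle$, then resolve $R/I$ by the mapping cone of a lift of multiplication by $F$---but your verification of the one nontrivial hypothesis is genuinely different. The paper checks that $I$ is a generic complete intersection concretely: nondegeneracy makes $x_1,\ldots,x_6$ units in the localization at $I$, so the two linear first syzygies make two of the generators of $J$ redundant there, leaving four local generators for a height-four ideal. You instead observe that for a prime ideal in the polynomial ring the condition is automatic: $R_I$ is a regular local ring of dimension four and $IR_I$ is its maximal ideal, hence generated by a regular system of parameters, i.e.\ by a regular sequence. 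This is simpler and more general, and it correctly shows (as you note) that the hypothesis $J\subseteq I_2$ is not needed for this particular conclusion; its role is to set up the case analysis continued in Lemma~\ref{3223betti} and Proposition~\ref{bettiSkew3223}, where the explicit matrix $M$ and the identification of which generators collapse locally feed into the Pfaffian description. Two minor remarks: your generator count should read $b_{12}+b_{13}=4+2=6$ (in the paper's conventions $b_{23}$ counts linear first syzygies, not cubic generators), and your side claim that $\deg F=2$ leans on the assertion that Type 2.4 is not the mapping cone of a cubic, which the paper substantiates only later via the Hilbert-series argument of Lemma~\ref{3223betti}; since the lemma as stated does not fix $\deg F$, the mapping-cone conclusion is unaffected. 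Finally, both you and the paper pass from the local statement of \cite{VV} to the graded setting without detailed justification; your explicit acknowledgement of that step is at least as careful as the paper's treatment.
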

\begin{proof}
Because the two linear first syzygies on $I_2$ are of the form $[x_1,x_2,x_3]^t$ and $[x_4,x_5,x_6]^t$ and $I$ is nondegenerate, $I$ contains no linear form, so $\{x_1,\ldots,x_6\}$ are all units when $R/I$ is localized at $I$. Thus, in the localization, two of the generators for $J$ are redundant, and therefore $I$ is a generic complete intersection, of deviation two, so the result of \cite{VV} applies (we assume henceforth that $2$ is a unit).
\end{proof}

\begin{lem}\label{3223betti}
Assume $Y$ is an arithmetically Gorenstein variety of codimension $3$ and $X$ is a nondegenerate hypersurface section of $Y$ with betti diagram of Type 2.4. Then $Y$ must have betti diagram: \end{lem}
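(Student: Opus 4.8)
The plan is to read the graded Betti numbers of $R/I_Y$ directly off those of $R/I_X$, using the long exact sequence in $\Tor$ attached to the hypersurface section, and then to sanity-check the outcome against the Buchsbaum--Eisenbud structure of codimension-three Gorenstein ideals. First I would fix notation: write $I_X = I_Y + \langle F\rangle$ for the equation $F$ of the section. Since $Y$ is arithmetically Gorenstein, $R/I_Y$ is Cohen--Macaulay, and since $\dim X = \dim Y - 1$ the class of $F$ lies in no associated prime of $R/I_Y$, so $F$ is a nonzero divisor on $R/I_Y$ (this is also part of \lemref{vvapplication}). The form $F$ is a minimal generator of $I_X$, necessarily of degree $2$ or $3$: degree $1$ is ruled out by nondegeneracy of $X$, degree $\ge 4$ by the fact that the Type 2.4 diagram has generators only in degrees $2$ and $3$, and degree $3$ because, as noted before \lemref{quadricsinI}, a Type 2.4 diagram is not a mapping cone over a cubic --- concretely, the recursion below with shift $3$ would give $b_{1,2}(R/I_Y)=b_{1,2}(R/I_X)=4$ and then force $b_{2,5}(R/I_Y)=b_{2,5}(R/I_X)-b_{1,2}(R/I_Y)=2-4=-2<0$. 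Hence $F$ is a quadric.

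Next I would extract the recursion. Applying $\Tor^S_\bullet(-,\K)$ to
\[
0 \longrightarrow (R/I_Y)(-2)\xrightarrow{\,F\,} R/I_Y \longrightarrow R/I_X \longrightarrow 0,
\]
the maps $\Tor^S_i\big((R/I_Y)(-2),\K\big)\to\Tor^S_i(R/I_Y,\K)$ in the long exact sequence are the maps induced by multiplication by $F$; since $\Tor^S_i(R/I_Y,\K)$ is a $\K$-vector space annihilated by the maximal ideal and $F\in\mathfrak{m}$, these vanish. The long exact sequence therefore breaks into
\[
0 \longrightarrow \Tor^S_i(R/I_Y,\K)_j \longrightarrow \Tor^S_i(R/I_X,\K)_j \longrightarrow \Tor^S_{i-1}(R/I_Y,\K)_{j-2} \longrightarrow 0,
\]
i.e.\ $b_{i,j}(R/I_X) = b_{i,j}(R/I_Y) + b_{i-1,\,j-2}(R/I_Y)$; equivalently, the mapping cone resolution of $R/I_X$ furnished by \lemref{vvapplication} is already minimal.

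Then I would solve this recursion, working in increasing homological degree. Starting from $b_{0,0}(R/I_Y)=1$ and using $\pdim(R/I_Y)=3$ (codimension-three Cohen--Macaulay) so that $b_{i,\cdot}(R/I_Y)=0$ for $i\ge 4$, the Type 2.4 entries give, in order, $b_{1,2}(R/I_Y)=4-1=3$ and $b_{1,3}(R/I_Y)=2$; then $b_{2,3}(R/I_Y)=2$, $b_{2,4}(R/I_Y)=6-3=3$, $b_{2,5}(R/I_Y)=2-2=0$; then $b_{3,6}(R/I_Y)=4-3=1$ with all other $b_{3,j}(R/I_Y)=0$; and the identity $b_{4,8}(R/I_X)=b_{3,6}(R/I_Y)=1$ closes the bookkeeping. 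This is exactly the asserted Betti diagram, with total Betti numbers $1,5,5,1$.

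Finally I would record a consistency check: the diagram obtained is self-dual with socle term $R(-6)$, its Artinian reduction has $H$-vector $(1,3,3,1)$, and it has five minimal generators, in degrees $(2,2,2,3,3)$ --- precisely the shape Buchsbaum--Eisenbud forces for a codimension-three Gorenstein ideal, realized here as the $4\times4$ Pfaffians of a $5\times5$ skew-symmetric matrix having a $3\times3$ block of quadrics and a $3\times2$ block of linear forms. The argument is mostly bookkeeping once $\deg F$ is known; the two points requiring care are the vanishing of the induced maps on $\Tor$ (minimality of the mapping cone), which is what makes the recursion determine $R/I_Y$ uniquely, and the exclusion of $\deg F = 3$ in favor of $\deg F = 2$ --- this is where I expect the only real friction, since it is the one step that is not a formal consequence of the preceding lemmas but needs the negative-Betti-number contradiction above.
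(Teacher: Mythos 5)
Your proposal is correct, and it reaches the displayed table by a route that is genuinely different from (and in one respect more complete than) the paper's. The paper argues at the level of Hilbert series: the alternating sum of the Type 2.4 diagram gives numerator $(1-t^2)^4$, so if $I_X=I_Y+\langle F\rangle$ with $\deg F=d$ then the numerator $f(t)$ of $Y$ satisfies $f(t)(1-t^d)=(1-t^2)^4$; divisibility forces $d\in\{1,2\}$, nondegeneracy kills $d=1$, and the betti table of $Y$ is then asserted. You instead work at the level of $\Tor$: since multiplication by $F\in\mathfrak{m}$ annihilates $\Tor_i(R/I_Y,\K)$, the long exact sequence from $0\to (R/I_Y)(-d)\to R/I_Y\to R/I_X\to 0$ splits into short exact sequences, giving the exact additivity $b_{i,j}(R/I_X)=b_{i,j}(R/I_Y)+b_{i-1,j-d}(R/I_Y)$ (minimality of the mapping cone); you then fix $d=2$ by nondegeneracy, the absence of generators of $I_X$ in degrees $\ge 4$ (note that $F$ being a minimal generator is itself a consequence of your recursion, since $b_{1,d}(R/I_X)\ge b_{0,0}(R/I_Y)=1$), and the negative-entry contradiction for $d=3$ — the same observation the paper records just before Lemma 3.2 — and finally solve the recursion using $\pdim(R/I_Y)=3$. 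I checked the bookkeeping and it yields exactly the stated $1,5,5,1$ diagram. What your route buys is precisely the step the paper leaves implicit: knowing only that $Y$ is codimension-three arithmetically Gorenstein with $H$-vector $(1,3,3,1)$ does not by itself exclude a complete intersection of three quadrics (totals $1,3,3,1$); one must feed the Type 2.4 data back in (e.g. $b_{1,3}(R/I_X)=2$ forces two cubic generators on $I_Y$), and your $\Tor$ recursion does this automatically while determining every graded entry. What the paper's route buys is brevity: the divisibility of $(1-t^2)^4$ by $1-t^d$ disposes of all $d\notin\{1,2\}$ in a single line, whereas you need the separate case analysis on generator degrees plus the $d=3$ contradiction.
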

\begin{small}
$$\vbox{\offinterlineskip 
\halign{\strut\hfil# \ \vrule\quad&# \ &# \ &# \ &# \ &# \ &# \
&# \ &# \ &# \ &# \ &# \ &# \ &# \
\cr 
total&1&5&5&1\cr 
\noalign {\hrule}
0&1&-- &--&--&\cr 
1&--&3&2 &--&     \cr 
2&--&2&3 &-- &   \cr 
3&--&--&-- &1&   \cr 
}}
$$
\end{small}
\begin{proof}
The Hilbert series of $X$ is 
\begin{align*}
h_{t}(X)=\frac{1}{(1-t)^{n}}(1-t^2)^4.
\end{align*}
Assume $h_{t}(Y)=\frac{1}{(1-t)^n}f(t)$. Then 
\[
f(t)(1-t^{d})=(1-t^2)^4. 
\]
So $d\in\{1,2\}$. But $X$ does not lie in any hyperplane. Therefore $d$ must be $2$ and $Y$ has the desired betti table.
\end{proof}

\begin{prop}\label{bettiSkew3223}
Let $V(I)$ be GoCY in $\p^7$ with betti diagram of Type 2.4. If the linear forms of the matrix $M$ span a space of dimension six, then up to a change of basis, $I$ is generated by the Pfaffians of a $5\times5$ skew symmetric matrix $N$ as below, along with a quadric $q_4$ which is a nonzero divisor on $R/\Pfaff(N)$. The ideal $\Pfaff(N)$ is singular along a $\p^1$, and so $V(I)$ has at least two singular points. 
\begin{align*}
N=\begin{bmatrix}
0& x_1 & x_2 & x_3& 0\\
-x_1 & 0 & q_1 & q_2 & x_4\\
-x_2 & -q_1 & 0 & q_3 & x_5\\
-x_3 & -q_2 & -q_3 & 0 & x_6\\
0 & -x_4 & -x_5 & -x_6 & 0
\end{bmatrix}
\end{align*}
where the $q_j$'s are quadrics.
\end{prop}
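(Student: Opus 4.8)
The plan is to bootstrap from \lemref{vvapplication} and \lemref{3223betti} to the Buchsbaum--Eisenbud structure theorem, to pin down the skew matrix by counting linear syzygies, and to finish with a Jacobian computation. First I would write $I=I'+\langle F\rangle$ as in \lemref{vvapplication}, with $I'$ a height-three Gorenstein ideal and $F$ a nonzerodivisor on $R/I'$; here $F$ is a quadric by the remark preceding \lemref{quadricsinI}, so by \lemref{3223betti} the Betti table of $R/I'$ has three quadric generators, two cubic generators, and exactly two linear first syzygies, i.e.\ $b_{23}(I')=2$. By \cite{BE}, $I'=\Pfaff(N)$ for a $5\times5$ skew-symmetric matrix $N$ of forms, and matching degrees in the self-dual minimal resolution forces, after reordering, a splitting of the five indices into a triple $T_1$ and a pair $T_0$ so that the $T_1\times T_1$ block of $N$ is a skew matrix of quadrics, the $T_1\times T_0$ block is a matrix $B$ of linear forms, and the $T_0\times T_0$ block is $\left[\begin{smallmatrix}0&c\\-c&0\end{smallmatrix}\right]$ for a scalar $c$; the three sub-Pfaffians deleting an index of $T_1$ are the quadric generators, the two deleting an index of $T_0$ are the cubics.

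The crux is to show that the quadric part $I'_2$ of $I'$ equals $J=I_2(M)$, where $M=\left[\begin{smallmatrix}x_1&x_2&x_3\\x_4&x_5&x_6\end{smallmatrix}\right]$ is the normalized matrix of \lemref{quadricsinI}. Both $I'_2$ and $J_2$ are three-dimensional subspaces of the four-dimensional space $I_2=J_2\oplus\K q_4$, and the linear first syzygies of $I_2$ span exactly $\langle(x_1,x_2,x_3),(x_4,x_5,x_6)\rangle$ (proof of \lemref{vvapplication}). Writing a basis of $I'_2$ in the basis $(m_1,m_2,m_3,q_4)$ of $I_2$ and pushing a linear first syzygy of $I'_2$ forward into $I_2$, the fact that $q_4$ is a nonzerodivisor on $R/J$ kills the $q_4$-coefficient; then $b_{23}(I')=2$, together with the elementary observation that three linearly independent linear forms cannot lie in a two-dimensional space of linear forms, forces first $q_4\notin I'$ and then $I'_2=J_2$. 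A second application of this count shows $c=0$: if $c\ne 0$ the two syzygies coming from the $T_0$-rows of $N$ each involve a cubic Pfaffian, and together with the two genuine linear syzygies on $J_2$ they would furnish four independent linear first syzygies of $I'$, contradicting $b_{23}(I')=2$. With $c=0$, expanding the three quadric sub-Pfaffians exhibits them (up to sign) as the $2\times2$ minors of the $3\times2$ block $B$ of linear entries of $N$; since those minors generate $J=I_2(M)$, uniqueness of the Hilbert--Burch matrix shows $B$ and $M^t$ differ by constant row and column operations, and absorbing these into a block-diagonal change of basis on $N$'s rows and columns (which preserves $\Pfaff(N)$, the block shape, and $c=0$) brings $N$ to the displayed form, its $T_1\times T_1$ block being some skew matrix of quadrics $q_1,q_2,q_3$. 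This gives $I=\Pfaff(N)+\langle q_4\rangle$ with $q_4$ a nonzerodivisor on $R/\Pfaff(N)$.

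For the singularity claim, the line $\Lambda=V(x_1,\dots,x_6)\subseteq\p^7$ lies on $V(\Pfaff(N))$: the three quadric Pfaffians are minors of $M$ and the two cubic Pfaffians are $x_1q_3-x_2q_2+x_3q_1$ and $x_4q_3-x_5q_2+x_6q_1$, all of which vanish on $\Lambda$. At a point $p\in\Lambda$ the Jacobian of the five generators has rank at most two, since the three quadric Pfaffians have vanishing gradient at $p$ while the two cubics contribute gradients supported in the $x_1,x_2,x_3$ and the $x_4,x_5,x_6$ coordinates with entries $q_i(p)$; hence the codimension-three variety $V(\Pfaff(N))$ is singular along all of $\Lambda$. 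Intersecting with the quadric $V(q_4)$ cuts $\Lambda$ in a length-two subscheme, each point of which is singular on the hypersurface section $V(I)$ (a hypersurface section through a singular point is singular), giving the last assertion.

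I expect the middle step to be the real obstacle: identifying $I'_2$ with $J$ and ruling out both $q_4\in I'$ and $c\ne 0$ is where the nonzerodivisor hypotheses of \lemref{quadricsinI} and \lemref{vvapplication} have to be combined with the syzygy bookkeeping to reconcile the matrix $M$, the quadric part of $I'$, and the linear block $B$ of $N$. By comparison, the degree count fixing the shape of $N$ and the concluding Jacobian computation are routine.
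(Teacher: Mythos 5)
Your proposal is correct and takes essentially the same route as the paper: combine \lemref{quadricsinI}, \lemref{vvapplication}, and \lemref{3223betti} with the Vasconcelos--Villarreal and Buchsbaum--Eisenbud structure results to put $I$ in the form $\Pfaff(N)+\langle q_4\rangle$, then conclude with the identical Jacobian computation along $V(x_1,\ldots,x_6)\simeq \p^1$ and the intersection with $V(q_4)$. The only difference is one of detail: the paper compresses the normalization of $N$ (the degree bookkeeping for the block shape, the identification $I'_2=J$, the vanishing of the scalar entry, and the Hilbert--Burch matching of the linear block with $M$) into a single sentence, while you spell these steps out, and the arguments you supply for them are sound.
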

\begin{proof}
Combining Lemmas~\ref{quadricsinI}, \ref{vvapplication}, and \ref{3223betti} and the results of \cite{VV} shows that $I$ is of the form above. To see that the singular locus is as claimed, we compute that 
\[
\Pfaff(N) = J + \langle x_3q_1-x_2q_2+x_1q_3, x_6q_1-x_5q_2+x_4q_3 \rangle,
\]
where $J$ is the ideal of the minors of the matrix $M$ above Lemma~\ref{vvapplication}. In particular, 
\[
V(x_1, \ldots x_6) \simeq \p^1 \subseteq V(\Pfaff(N)),
\]
and $V(\Pfaff(N))$ is singular along this $\p^1$, because the Jacobian matrix of $\Pfaff(N)$ is
\begin{align*}
\mathrm{Jac}(\Pfaff(N))= \begin{bmatrix}
x_5 & x_6 & 0 & * & *\\
-x_4 & 0 & x_6 & * & *\\
0 & -x_4 & -x_5 & * & *\\
-x_2 & -x_3 & 0 & * & *\\
x_1 & 0 & -x_3 & * & *\\
0 & x_1 & x_2 & * & *\\
0 & 0 & 0 & * & *\\
0 & 0 & 0 & * & *
\end{bmatrix}
\end{align*}
where $*$ are quadrics. Hence when $\{x_1,\ldots,x_6\}$ vanish, $\mathrm{Jac}(\Pfaff(N))$ has rank $\leq 2$, so is singular along the $\p^1$. Intersecting $V(\Pfaff(N))$ with the hypersurface $V(q_4)$, we find that $V(I)$ must be singular (at least) at a degree two zero scheme. 
\end{proof}


\section{Computational aspects and Ideals with mostly quadratic generators}
As noted earlier, GoCY's were first investigated systemically from a computational standpoint by Bertin in \cite{Ber}. Below we describe algorithms which, in certain situations, offer a substantial speed up in processing; in some cases, we have seen an improvement in runtime by a factor of 500. 

Let $I$ be the ideal sheaf of a smooth Calabi-Yau threefold in $\p^n$.  This implies 
that 
\[ H^1(\mathcal{O}_X) = H^2(\mathcal{O}_X) = 0,\]
and Kodaira vanishing
and Serre duality give that 
\[ H^1(\mathcal{O}_X(-1)) = H^2(\mathcal{O}_X(-1)) = 0. \]
From the fundamental short exact sequence
\begin{equation}\label{cotan}
0 \longrightarrow I/I^2 \longrightarrow \Omega^1_{\p^n}\otimes \mathcal{O}_X \longrightarrow \Omega^1_X \longrightarrow 0,
\end{equation}
we have that 
\[
\chi(\Omega^1_X) = \chi(\Omega^1_{\p^n}\otimes \mathcal{O}_X)-\chi(I/I^2).
\] 
The Euler characteristic of both sheaves can be computed from the corresponding modules of global sections via
Gr\"obner bases; it turns out we actually only need one computation. Tensoring the short exact sequence 
\begin{equation}\label{cotan2}
0 \longrightarrow \Omega^1_{\p^n} \longrightarrow \mathcal{O}^{n+1}_{\p^n}(-1) \longrightarrow \mathcal{O}_{\p^n} \longrightarrow 0
\end{equation}
with $\mathcal{O}_X$, and using the cohomology vanishings for $H^1(\mathcal{O}_X)$ and $H^2(\mathcal{O}_X)$ yields that 
\[
\chi(\Omega^1_{\p^n}\otimes \mathcal{O}_X) = (n+1)HP(S/I_X,-1).
\]
In particular, all the computational expense to compute $\chi(\Omega^1_X)$ comes from computing $in(I/I^2)$. To compute $h^{1,1}(X) $, we use the long exact sequence in cohomology of Equation~\ref{cotan}. Vanishing of $H^0(\Omega^1_X)$ due to the Calabi-Yau property and the long exact sequence in cohomology yield
\[
0 \longrightarrow H^1(I/I^2)
\longrightarrow H^1(\Omega^1_{\p^n}\otimes \mathcal{O}_X)\longrightarrow H^1(\Omega_X)\longrightarrow H^2(I/I^2)\longrightarrow H^2(\Omega^1_{\p^n}\otimes \mathcal{O}_X) \longrightarrow \cdots
\]
\noindent Now, $H^2(\Omega^1_{\p^n}\otimes \mathcal{O}_X)=0$, 
because it sits in the exact sequence 
\[
\cdots \longrightarrow H^1(\mathcal{O}_X)\longrightarrow H^2(\Omega^1_{\p^n}\otimes \mathcal{O}_X) \longrightarrow 
H^2(\mathcal{O}_X^{n+1}(-1)) \longrightarrow \cdots, 
\]
and from Equation~\ref{cotan2} we have $h^1(\Omega^1_{\p^n}\otimes \mathcal{O}_X) = 1$. Hence
\[ h^{1,1}(X) = h^2(I/I^2) - h^1(I/I^2)+1. \]
Writing $I/I^2$ for both the graded $S$-module and the sheaf, by local duality (see \cite{E})
\begin{equation}\label{Extvanish}
h^1(I/I^2) = \dim Ext^{n-1}(I/I^2,S)_{-n-1} \mbox{ and }  h^2(I/I^2)=\dim Ext^{n-2}(I/I^2,S)_{-n-1} 
\end{equation}
If the projective dimension of the $S$-module $I/I^2$ is less than $n-2$, then the vanishing of the Ext modules above is automatic, and $h^{1,1}(X) = 1$. If not, we can compute $h^{1,1}(X)$ from the formula. Projective dimension can be computed quickly using the {\tt Macaulay2} command {\tt minimalBetti}, which we illustrate below.
\begin{exm}
We revisit the Huneke-Ulrich ideal appearing in \S 3,
defining a smooth Calabi-Yau 3-fold in $\p^8$ of degree 21. 
\begin{verbatim}

-- Input: a homogeneous ideal I defining a dim 3 smooth Calabi-Yau variety X
-- Output: Euler characteristic of Omega^1_X

CYEuler = (I) -> (
M := coker prune presentation (I/I^2);
MM := coker leadTerm gb presentation M;
HP1 := hilbertPolynomial MM;
F := source vars ring I;
HP2 := hilbertPolynomial(F ** coker gens I);
euler HP2 - euler HP1)

i47 : time CYEuler HunekeUlrich
-- used 28.3018 seconds

o47 = 51

i48 : time minimalBetti M          
-- used 2.71652 seconds

0  1   2   3  4  5
o48 = total: 7 50 140 154 78 21
2: 6  1   .   .  .  .
3: 1 42  27   1  .  .
4: .  6 107 132 21  .
5: .  1   6  21 57 21

\end{verbatim}
By Equation~\ref{Extvanish}, $h^{1,1}(X) = 1$, because $Ext^{n-2}(I/I^2,S) = 0$, so in particular $Ext^{n-2}(I/I^2,S)_{-n-1} = 0$.   Since the Euler characteristic is 51, $h^{1,2}(X) = 52$.
\end{exm}

\pagebreak  

\noindent{\bf Remarks and Open Questions}: 
\begin{enumerate}
\item For codimension four and regularity two, only one betti table occurs: for the $2 \times 2$ minors of a generic $3 \times 3$ matrix, and for regularity three, there are three betti tables. The calculations are the same as in the proof for Theorem 2.2, and we leave them to the interested reader. 
\item For regularity four and codimension five, and regularity five and codimension four there are, respectively, 68 and 36 possibilities, which can be tackled with the same approach.\item Relate Theorem~\ref{possibleBetti} to the parameter space $Gor_T$, studied in depth in \cite{ik}.
\item Theta characteristics are Gorenstein of regularity 4. Is there a connection to GoCY's?
\item Gorenstein point configurations \cite{EP} have Gale duality. Does this lead to a mirror construction?

\end{enumerate}
\renewcommand{\baselinestretch}{1.0}
\small\normalsize 

\bibliographystyle{amsalpha}

\end{document}